\begin{document}

\newtheorem{theorem}[subsection]{Theorem}
\newtheorem{proposition}[subsection]{Proposition}
\newtheorem{lemma}[subsection]{Lemma}
\newtheorem{corollary}[subsection]{Corollary}
\newtheorem{conjecture}[subsection]{Conjecture}
\newtheorem{prop}[subsection]{Proposition}
\numberwithin{equation}{section}
\newcommand{\mr}{\ensuremath{\mathbb R}}
\newcommand{\mc}{\ensuremath{\mathbb C}}
\newcommand{\dif}{\mathrm{d}}
\newcommand{\intz}{\mathbb{Z}}
\newcommand{\ratq}{\mathbb{Q}}
\newcommand{\natn}{\mathbb{N}}
\newcommand{\comc}{\mathbb{C}}
\newcommand{\rear}{\mathbb{R}}
\newcommand{\prip}{\mathbb{P}}
\newcommand{\uph}{\mathbb{H}}
\newcommand{\fief}{\mathbb{F}}
\newcommand{\majorarc}{\mathfrak{M}}
\newcommand{\minorarc}{\mathfrak{m}}
\newcommand{\sings}{\mathfrak{S}}
\newcommand{\fA}{\ensuremath{\mathfrak A}}
\newcommand{\mn}{\ensuremath{\mathbb N}}
\newcommand{\mq}{\ensuremath{\mathbb Q}}
\newcommand{\half}{\tfrac{1}{2}}
\newcommand{\f}{f\times \chi}
\newcommand{\summ}{\mathop{{\sum}^{\star}}}
\newcommand{\chiq}{\chi \bmod q}
\newcommand{\chidb}{\chi \bmod db}
\newcommand{\chid}{\chi \bmod d}
\newcommand{\sym}{\text{sym}^2}
\newcommand{\hhalf}{\tfrac{1}{2}}
\newcommand{\sumstar}{\sideset{}{^*}\sum}
\newcommand{\sumprime}{\sideset{}{'}\sum}
\newcommand{\sumprimeprime}{\sideset{}{''}\sum}
\newcommand{\shortmod}{\ensuremath{\negthickspace \negthickspace \negthickspace \pmod}}
\newcommand{\V}{V\left(\frac{nm}{q^2}\right)}
\newcommand{\sumi}{\mathop{{\sum}^{\dagger}}}
\newcommand{\mz}{\ensuremath{\mathbb Z}}
\newcommand{\leg}[2]{\left(\frac{#1}{#2}\right)}
\newcommand{\muK}{\mu_{\omega}}
\newcommand{\trace}{\mathop{\mathrm{Tr}}}

\title[Moments of central values of cubic {H}ecke {$L$}-functions of $\mq(i)$]{Moments of central values of cubic {H}ecke {$L$}-functions of $\mq(i)$}

\date{\today}
\author{Peng Gao and Liangyi Zhao}

\begin{abstract}
In this paper, we study moments of central values of cubic Hecke $L$-functions in $\mq(i)$, and establish quantitative non-vanishing result for those values.
\end{abstract}

\maketitle

\noindent {\bf Mathematics Subject Classification (2010)}: 11L40, 11R16, 11R42  \newline

\noindent {\bf Keywords}: cubic Hecke character, cubic large sieve, moments of Hecke $L$-functions

\section{Introduction}
\label{sec1}

Central values of $L$-functions have been investigated extensively due to the significant arithmetic information carried by them. Among the various families of $L$-functions studied in the literature, the quadratic $L$-functions have received the most attention. For example, moments of quadratic Dirichlet $L$-functions were computed to address the non-vanishing issue of the central values. Asymptotic formulas for the first and second moments of central values of the family of quadratic Dirichlet $L$-functions were obtained by M. Jutila \cite{Jutila} with the error term for the first moment improved in \cites{DoHo, MPY, ViTa}.  K. Soundararajan \cite{sound1} proved asymptotic formulas for the second and third moments with power savings.  The error term for the third moment was improved in \cite{DGH} and \cite{Young1}. \newline

   Via the method of double Dirichlet series, moments of  various families of $L$-functions associated with characters of a fixed higher order were studied in \cites{FaHL, FHL,Diac, BFH}.  Using more classical approaches, W. Luo \cite{Luo} studied moments of cubic Hecke $L$-functions on $\mq(\omega)$ with $\omega=\frac {-1+\sqrt{3}i}{2}$ and S. Baier and M. P. Young \cite{B&Y} investiaged the moments of cubic Dirichlet $L$-functions.  Motivated by these results, we shall consider, in this paper, the moments of central values of cubic Hecke $L$-functions in the Gaussian field $\mq(i)$. \newline

   Before stating our results, we shall set up some notations.  For an arbitrary number field $E$, we use $\mathcal{O}_E$ for the ring of integers in $E$ and $U_E$ for the group of units in $\mathcal{O}_E$.  For $k \in \mathcal{O}_E$, an associate of $k$ is any element of $k' \in \mathcal{O}_E$ such that $k'/k \in U_E$. We call an element $\varpi \in \mathcal{O}_E$ a prime if $(\varpi)$ is a prime ideal in $\mathcal{O}_E$ and an element $n \in \mathcal{O}_E$ square-free if $(n)$ is square-free as an ideal in $\mathcal{O}_E$. If $E$ has class number one and $q \in \mathcal{O}_E$, a characters $\chi$ modulo the ideal $(q)$ of $E$ is a homomorphism:
\begin{align*}
  \chi: \left (\mathcal{O}_E / (q) \right )^{\times} \rightarrow S^1 :=\{ z \in \mc : \hspace{0.1in} |z|=1 \}.
\end{align*}
Moreover, $\chi$ is said to be a primitive character modulo $(q)$ if it does not factor through $\left (\mathcal{O}_E / (q') \right )^{\times}$ for any proper divisor $q'$ of $q$ (i.e. $q'$ is not an associate of $q$). In this case, we say that the conduct of $\chi$ is $(q)$. When it is clear from the context, we shall use the term modulo $q$ to mean modulo $(q)$.   $\chi$ is a principal character if it factors through $\left (\mathcal{O}_E / (1) \right )^{\times}$. When $\left (\mathcal{O}_E / (q) \right )^{\times}$ is divisible by $3$, then we can define characters of order exactly $3$ modulo $q$ and we call these cubic characters modulo $q$.  It is easy to see that primitive cubic characters modulo $q$ exist if and only if $q=\prod^n_{i=1}\varpi_i$ is a product of distinct primes $\varpi_i$ such that $N_E(\varpi_i) \equiv 1 \pmod 3$ for all $i$, where $N_E$ denotes the norm from $E$ to $\mq$. In this case, every primitive cubic character $\chi$ modulo $q$ is of the form $\chi=\prod^n_{i=1}\chi_i$, where $\chi_i$ is a cubic character modulo $\varpi_i$, but regarded as a cubic character modulo $q$ by composing $\chi_i$  with the canonical homomorphism
\begin{align*}
  \left (\mathcal{O}_E / (q) \right )^{\times}  \rightarrow \left (\mathcal{O}_E / (\varpi_i) \right )^{\times}.
\end{align*}
   Note that there are precisely two cubic characters modulo $\varpi_i$ and they are primitive. \newline

   Throughout this paper, we let $K=\mq(i)$ and $F=\mq(\zeta_{12})$, where $\zeta_{12}$ is a fixed $12$-th primitive root of unity such that $\zeta^3_{12}=i, \omega=\zeta^4_{12}$. It is well-known that $\mathcal{O}_K=\mz[i]$ and $U_K=\langle i \rangle$. Both $K$ and $F$ have class number one (see \cite[Theorem 4.10]{Wa} and the table on class numbers in \cite{Wa}). Let $\chi$ be a primitive cubic character modulo some $q \in\mathcal{O}_K$. $\chi(i)=1$ since the order of $i$ is co-prime to $3$. So $\chi$ is trivial on $U_K$ and hence can be regarded as a primitive Hecke character modulo $q$ of trivial infinite type of $K$. For $m \in \mathcal{O}_K, n \in \mathcal{O}_F, (n,6)=1$, let $\psi_{m}((n)) = \leg{m}{n}_3$ where $\leg {\cdot}{\cdot}_3$ is the cubic residue symbol in $F$ (to be defined in Section \ref{sec2.4}). Then $\psi_m$ is a cubic Hecke character of trivial infinite type of $F$. It is our goal in this paper to study the first and second moments of the Hecke $L$-functions associated with these cubic characters at the central value. Our result on the first moment is
\begin{theorem}
\label{firstmoment}
  Let $w: (0,\infty) \rightarrow \mr$ be a smooth, compactly supported function.  Then
\begin{equation*}
 \sum_{\substack{ (q), \ q \in \mathcal{O}_K \\(q,6)=1}}\;  \sumstar_{\substack{\chi \bmod{q} \\ \chi^3 = \chi_0}} L(1/2, \chi) w\leg{N_K(q)}{Q} = C_0 Q \widehat{w}(0) + O(Q^{37/38 + \varepsilon}),
\end{equation*}
where $C_0 > 0$ is a constant that can be given explicitly in terms of an Euler product (see \eqref{eq:c} below), and $\widehat{w}$ is the Fourier transform of $w$.  Here and after the asterisk $*$ on the sum over $\chi$ restricts the sum to primitive characters defined on $\mathcal{O}_K$, and $\chi_0$ denotes the principal character.
\end{theorem}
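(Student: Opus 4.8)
The plan is to proceed by the standard recipe for a first moment: replace each central value $L(1/2,\chi)$ by a short Dirichlet series via an approximate functional equation, interchange the order of summation so that the average over the family of characters is executed first, and isolate the part that survives this averaging. Since $\chi$ is a primitive cubic Hecke character modulo $q$ of trivial infinite type, its completed $L$-function satisfies a functional equation relating $s\mapsto 1-s$ with a root number $\epsilon(\chi)$ of modulus one, expressible through the normalized cubic Gauss sum $g(\chi)/\sqrt{N_K(q)}$. At the central point this yields
\[
L(1/2,\chi)=\sum_{n}\frac{\chi(n)}{\sqrt{N_K(n)}}\,V\!\left(\frac{N_K(n)}{N_K(q)}\right)+\epsilon(\chi)\sum_{n}\frac{\overline{\chi}(n)}{\sqrt{N_K(n)}}\,V\!\left(\frac{N_K(n)}{N_K(q)}\right),
\]
where $V$ is a smooth weight of rapid decay and the sums over ideals $n$ of $\mathcal{O}_K$ are effectively truncated at $N_K(n)\ll N_K(q)^{1/2+\varepsilon}\ll Q^{1/2+\varepsilon}$.

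For the main term I would substitute the first sum into the moment, interchange summations, and confront the character sum $\sumstar_{\chi\bmod q,\ \chi^3=\chi_0}\chi(n)$ over primitive cubic characters. This sum factors over the prime divisors of the square-free modulus $q$ and detects whether $n$ is a cubic residue; its average over $q$ is therefore governed by the perfect cubes $n$, for which $\chi(n)=1$ whenever $(n,q)=1$, so that these terms do not oscillate and produce the leading contribution, while all other $n$ are expected to cancel. Summing the cube terms $n=\ell^3$ against the weight $w(N_K(q)/Q)$ and the density of admissible moduli $q$ (square-free, coprime to $6$, with every prime factor $\varpi$ satisfying $N_K(\varpi)\equiv 1\pmod 3$) yields a quantity of size $Q$; the sum over $\ell$ converges and, combined with the local densities at each prime, assembles into the Euler product defining $C_0$, giving $C_0 Q\,\widehat{w}(0)$.

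The remaining pieces are the non-cube terms of the first sum and the entire dual sum carrying the factor $\epsilon(\chi)$. For the former I would open the character sum, detect the cubic residue condition, and apply the cubic large sieve (or Poisson summation) over the modulus $q$, so that only the nontrivial frequencies remain and a saving over the trivial bound is extracted. The genuinely hard part is the dual sum: its summand carries the cubic Gauss sum $g(\chi)$, which does not factor conveniently and whose phase must be exploited through cancellation rather than bounded pointwise. Here the plan is to invoke cubic reciprocity — this is exactly why the cubic residue symbol $\leg{\cdot}{\cdot}_3$ and the associated Hecke character $\psi_m$ are set up over $F=\mq(\zeta_{12})$, the smallest field containing both $i$ and $\omega$ — in order to transfer $\leg{n}{q}_3$ into $\leg{q}{n}_3$, thereby recasting the sum over $q$ as a cubic character sum to modulus $n$ to which the cubic large sieve inequality applies. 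Bounding the resulting mean square of twisted sums of Gauss sums against the large-sieve constant is where all the difficulty concentrates, and I expect the careful treatment of imprimitive contributions and of the interaction between the root number and cubic reciprocity over $F$ to be the most delicate step.

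Finally I would balance the two error contributions: the trivial estimate for short ranges of $n$ against the cubic-large-sieve estimate for long ranges, with the truncation length $\asymp Q^{1/2}$ fixed and a single free parameter governing the split. Optimizing this parameter produces the power saving, and the exponent $37/38$ emerges as the outcome of that optimization; any sharpening of the cubic large sieve constant would improve it directly.
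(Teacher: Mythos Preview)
Your outline captures the broad architecture correctly, but it misidentifies the key tool for the dual sum $\mathcal{M}_2$ and fixes the truncation at the wrong point, and these two issues together mean the argument as described would not produce a power saving.

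For the dual sum you propose to flip the cubic symbol by reciprocity and then apply the cubic large sieve. This does not work: after the approximate functional equation the dual sum carries the normalized Gauss sum $g_{3,F}(n)/\sqrt{N_F(n)}$, a unimodular coefficient in $n$, so the large-sieve bound on $\sum_m |\sum_n a_n \chi_n(m)|^2$ costs a factor $\|a\|^2 \asymp Q$ and returns only $\mathcal{M}_2 \ll Q^{4/3+\varepsilon}$, worse than trivial. The actual mechanism is entirely different: one forms the Dirichlet series $h(r,s;\lambda)=\sum_n \lambda(n) g_{3,F}(r,n) N_F(n)^{-s}$ and invokes Patterson's theorem that this continues meromorphically past $\Re s=1$ with a single pole at $s=4/3$. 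Shifting a Mellin contour across that pole is what yields $\mathcal{M}_2 \ll Q^{5/6+\varepsilon}B^{1/6}+Q^{2/3+\varepsilon}B^{5/6}$. This metaplectic input (the equidistribution of cubic Gauss sums) is the crux; the large sieve plays no role in bounding $\mathcal{M}_2$.

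Relatedly, the approximate functional equation is applied with a free asymmetry parameter, $A_nB=N_F(n)$ so effectively $AB=Q$, not with the symmetric choice $A=B=Q^{1/2}$. The remainder from $\mathcal{M}_1$ (non-cube $m$) is handled not by a direct large sieve over $q$ but by shifting to $\Re s=1/2$ and inserting the second-moment bound \eqref{eq:Heckevariant} for $L(1/2+it,\psi_m)$, giving $\mathcal{M}_1' \ll Q^{1/2+\varepsilon}A^{3/4+\varepsilon}$, together with $QA^{-1/6+\varepsilon}$ from the main-term extraction. The exponent $37/38$ then comes from optimizing $A=Q^{12/19}$, $B=Q^{7/19}$ across all four error pieces; with the truncation fixed at $Q^{1/2}$ and no Patterson input, no such balance is available.
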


    As for the second moment, we have
\begin{theorem}
\label{secmom}
  Let $Q\ge 1$. Then we have, using the same notations as Theorem~\ref{firstmoment},
\begin{equation} \label{secondm}
\sum\limits_{\substack{(q), \ q \in \mathcal{O}_K \\ N_K(q) \le Q \\ (q, 6)=1}}\ \sumstar\limits_{\substack{\chi \bmod q\\ \chi^3=\chi_0}} \left| L(1/2+it,\chi) \right|^2 \ll Q^{11/9+\varepsilon} (1+|t|)^{1+\varepsilon}.
\end{equation}
For $m \in \mathcal{O}_K$, let
\begin{equation}
\label{eq:HeckeL}
 L(s, \psi_m) =\sum_{\substack{(n), \ n \in \mathcal{O}_F \\ (n,6)=1}}\psi_{m}(n) N(n)^{-s}.
\end{equation}
   Then
\begin{equation}
\label{eq:secondmomentHecke}
 \sumstar_{\substack{ N_K(m) \leq M}} \left| L(1/2 + it, \psi_m) \right|^2 \ll M^{3/2 + \varepsilon} (1+|t^2|)^{4/3+ \varepsilon},
\end{equation}
where $\psi_m$ is the afore-mentioned cubic Hecke character of trivial infinite type of $F$ and the astrisk $*$ attached to the sum over $m$ indicates that $m$ runs over square-free elements of $\mathcal{O}_K $.
\end{theorem}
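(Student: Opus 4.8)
The plan is to treat both second moments by a single two-step strategy: an approximate functional equation first expresses each central value $|L(1/2+it,\cdot)|^2$ as a sum of cubic symbols of length about the square root of the analytic conductor, after which the resulting average over the family is estimated by a large sieve inequality for cubic characters. The one structural complication is that $K=\mq(i)$ contains no primitive cube root of unity, so the cubic symbols attached to characters of $\mathcal{O}_K$ only become available after extending scalars to $F=\mq(\zeta_{12})$; the cubic reciprocity law in $F$ is the tool that lets one move a modulus into and out of $\leg{\cdot}{\cdot}_3$ and so arrange the sums into large-sieve shape.

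For the first estimate I would decompose $N_K(q)\asymp Q$ dyadically, so that the approximate functional equation gives
\[
|L(1/2+it,\chi)|^2 \ll \Big| \sum_{N_K(n)\le N} \frac{a_n\,\chi(n)}{N_K(n)^{1/2}} \Big|^2 + (\text{dual term}), \qquad |a_n|\ll 1,
\]
with $N\asymp Q^{1/2}(1+|t|)$ (the single complex place of $K$ supplying the power of $(1+|t|)$). Expanding the square and using $\overline{\chi}=\chi^2$ turns the cross terms into $\chi(mn^2)$; the terms with $mn^2$ a perfect cube form a diagonal of the expected size $\ll Q^{1+\varepsilon}$, while the remainder is an average of $\sum_q\sumstar_\chi\chi(mn^2)$ to which the cubic large sieve applies once each $\chi$ is written through the residue symbol in $F$. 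Because the symbols live in $F$ rather than in $K$, the sieve available is weaker than the quadratic one, and it is the balancing of its cross term against the length $N\asymp Q^{1/2}(1+|t|)$ that forces the exponent up to the claimed $Q^{11/9+\varepsilon}(1+|t|)^{1+\varepsilon}$, the length being responsible for the power of $(1+|t|)$.

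The bound for the Hecke family is parallel but carried out entirely in $F$. After the functional equation, $|L(1/2+it,\psi_m)|^2$ is controlled by $\big|\sum_{N(n)\le Y}c_n\leg{m}{n}_3\big|^2$ plus a dual term, where, since $F$ has degree four with two complex places, $Y\asymp \sqrt{N_F(m)}\,(1+|t|)^2 \asymp N_K(m)(1+|t|)^2$. Here the crucial observation is that $n\mapsto\leg{m}{n}_3$ is exactly the cubic Hecke character $\psi_m$ modulo $m$, so that summing over square-free $m\in\mathcal{O}_K$ with $N_K(m)\le M$ is again a large-sieve average, now with moduli ranging over the thin subset $\mathcal{O}_K\subset\mathcal{O}_F$. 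Isolating the diagonal (the $n_1n_2^2$ that are perfect cubes, contributing $\ll M^{1+\varepsilon}$), estimating the remainder by the cubic large sieve, and then balancing its cross term against $Y$ while tracking the two archimedean factors of $F$, leads to $M^{3/2+\varepsilon}(1+|t^2|)^{4/3+\varepsilon}$.

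The principal obstacle in both cases is the cubic large sieve itself. In the cleanest (Heath-Brown) form its saving is limited by a cross term of mixed type $(MN)^{2/3}$, genuinely weaker than the $M+N$ enjoyed by quadratic, self-dual characters, and it is precisely this that prevents the $M^{1+\varepsilon}$ diagonal from being the final answer. A second, more technical, difficulty is the transfer between $\mathcal{O}_K$ and $\mathcal{O}_F$: since the moduli lie in $\mathcal{O}_K$ while the symbols live in $\mathcal{O}_F$, one must keep the moduli confined to $\mathcal{O}_K$ rather than opening up to all of $\mathcal{O}_F$, which would cost the full $N_F(m)\asymp N_K(m)^2$, and this is what ultimately fixes the exponents $11/9$ and $3/2$. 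The remaining tasks are routine by comparison: removing imprimitive and non-square-free characters by Möbius inversion, and extracting the uniformity in $t$ from the archimedean factors of the functional equation.
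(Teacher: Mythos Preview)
Your high-level plan --- approximate functional equation followed by the cubic large sieve --- is exactly the paper's approach, and your identification of the relevant truncation lengths ($N\asymp Q^{1/2}(1+|t|)$ for the $K$-family, $Y\asymp M(1+|t|)^2$ for the $F$-family) is correct.

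Where your description goes astray is the step in which you ``expand the square'' and then claim the cubic large sieve applies to the off-diagonal. The large sieve inequality (Theorem~\ref{cubiclargesieve}) bounds the \emph{un-expanded} quantity $\sum_q\sumstar_\chi\bigl|\sum_m a_m\chi(m)\bigr|^2$ directly; once you open the square, the remainder $\sum_{mn^2\text{ not a cube}}a_m\overline{a_n}\sum_q\sumstar_\chi\chi(mn^2)$ is not in a form to which the sieve applies term by term, and re-summing it just gives you back the original bilinear form minus the diagonal. The paper therefore never separates a diagonal by hand: after dyadic decomposition in $q$ and $m$, Mellin inversion to strip the weight $V_{it}$, and the reduction $m=d^2n$ with $n$ square-free (a step you omit but which is required, since the inner sum in \eqref{final} runs over square-free elements), one simply inserts the bound $Q^{11/9}+Q^{2/3}M$ from the minimum in Theorem~\ref{cubiclargesieve} and sums trivially over $d$. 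For \eqref{eq:secondmomentHecke} the argument is parallel, using the variant norm $B_2(Q,M)\ll (QM)^\varepsilon(Q^{4/3}+Q^{1/2}M)$, which the paper extracts from \eqref{B21} and \eqref{C2egen}, together with duality; the exponent $3/2$ then comes from the term $Q^{1/2}M$ with $Q\asymp M(1+t^2)$, and the $(1+t^2)^{4/3}$ from the term $Q^{4/3}$, not from any balancing against a hand-isolated diagonal.
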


  From Theorem \ref{firstmoment} and \ref{secmom},  we readily deduce, via a standard argument (see \cite{B&Y}), the following
\begin{corollary}
\label{coro:nonvanish}
 There exist infinitely many primitive cubic characters $\chi$ of $\mq(i)$ such that $L(1/2, \chi) \neq 0$.  More precisely, the number of such characters whose norms of conductor $\leq Q$ is $\gg Q^{7/9 - \varepsilon}$.
\end{corollary}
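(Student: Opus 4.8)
The plan is to deduce the corollary from Theorems~\ref{firstmoment} and~\ref{secmom} by a standard Cauchy--Schwarz argument, with no mollifier required. First I would fix a smooth, nonnegative, compactly supported weight $w$ on $(0,\infty)$ with $\widehat{w}(0) = \int_0^\infty w > 0$, supported on a bounded interval such as $[1/2,1]$, so that $w(N_K(q)/Q)$ is supported on moduli with $N_K(q) \in [Q/2, Q]$. Writing $w_\chi = w(N_K(q)/Q)$ for $\chi$ a primitive cubic character modulo $q$, Theorem~\ref{firstmoment} gives
\begin{equation*}
\sum_{\substack{(q),\ q\in\mathcal{O}_K\\ (q,6)=1}}\ \sumstar_{\substack{\chi \bmod q\\ \chi^3=\chi_0}} w_\chi\, L(1/2,\chi) = C_0 Q \widehat{w}(0) + O(Q^{37/38+\varepsilon}) \gg Q
\end{equation*}
for all large $Q$, since $C_0 > 0$, $\widehat{w}(0) > 0$, and $37/38 < 1$. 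Although individual values $L(1/2,\chi)$ need not be real, the family is closed under complex conjugation (as $\bar\chi = \chi^2$ is again a primitive cubic character of the same modulus, with $L(1/2,\bar\chi) = \overline{L(1/2,\chi)}$), so the displayed sum is real and the lower bound is genuine.

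Next I would apply Cauchy--Schwarz to the first moment, discarding the characters with $L(1/2,\chi) = 0$ as they do not contribute to the left-hand side:
\begin{equation*}
\left| \sum_\chi w_\chi L(1/2,\chi) \right|^2 \leq \Bigg( \sum_{\substack{\chi\\ L(1/2,\chi)\neq 0}} w_\chi \Bigg)\Bigg( \sum_\chi w_\chi\, |L(1/2,\chi)|^2 \Bigg).
\end{equation*}
The second factor is controlled by the second moment bound \eqref{secondm} at $t=0$: since $0 \leq w_\chi \ll 1$ and $w_\chi$ is supported on $N_K(q) \leq Q$, one has $\sum_\chi w_\chi |L(1/2,\chi)|^2 \ll Q^{11/9+\varepsilon}$. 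Combining this with the lower bound $\gg Q^2$ for the left-hand side yields
\begin{equation*}
\sum_{\substack{\chi\\ L(1/2,\chi)\neq 0}} w_\chi \gg \frac{Q^2}{Q^{11/9+\varepsilon}} = Q^{7/9-\varepsilon}.
\end{equation*}
Since each $w_\chi = O(1)$, the number of primitive cubic characters $\chi$ with $N_K(q)\le Q$ and $L(1/2,\chi)\neq 0$ is $\gg Q^{7/9-\varepsilon}$; this is nonzero and tends to infinity with $Q$, giving both the quantitative count and the infinitude statement.

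The only point requiring care---rather than a genuine obstacle---is the reconciliation of the smoothly weighted first moment in Theorem~\ref{firstmoment} with the sharp cutoff $N_K(q)\leq Q$ in the second moment \eqref{secondm}. This is handled cleanly by choosing $w$ supported on $[1/2,1]$, so that both sums range over $N_K(q)\leq Q$, and by bounding $w_\chi$ trivially when passing from the smooth weight to the sharp cutoff in the second factor. All the analytic difficulty resides in the two theorems themselves, so the deduction of the corollary amounts to this bookkeeping together with the elementary observation that $2-11/9 = 7/9$.
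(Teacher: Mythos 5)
Your proposal is correct and is exactly the ``standard argument'' the paper invokes (deferring to Baier--Young) rather than writing out: a positive first moment $\gg Q$ from Theorem~\ref{firstmoment}, the second moment bound $\ll Q^{11/9+\varepsilon}$ from \eqref{secondm} at $t=0$, and Cauchy--Schwarz yielding $\gg Q^{2-11/9-\varepsilon}=Q^{7/9-\varepsilon}$ nonvanishing characters. The bookkeeping points you flag (realness of the first-moment sum via closure under conjugation, and reconciling the smooth weight with the sharp cutoff) are handled correctly.
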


  One important ingredient in the proof of Theorem \ref{firstmoment} and \ref{secmom} is the following large sieve-type result with cubic characters of $K$.
\begin{theorem} \label{cubiclargesieve}
Let $(a_{m})$ be an arbitrary sequence of complex numbers indexed by the elements $m$ in $\mathcal{O}_K$. Then
\begin{equation} \label{final}
\begin{split}
\sum\limits_{\substack{(q) , \ q \in \mathcal{O}_K \\ Q<N_K(q)\le 2Q \\ (q, 6)=1}}\
& \sumstar\limits_{\substack{\chi \shortmod q\\ \chi^3=\chi_0}}  \left| \ \sumstar\limits_{\substack{M<N_K(m) \le 2M}}
a_{m} \chi(m)\right|^2 \\
\ll &
(QM)^{\varepsilon}\min\left\{Q^{5/3}+M,Q^{4/3}+Q^{1/2}M,Q^{11/9}+Q^{2/3}M \right\}  \sumstar\limits_{\substack{M<N_K(m) \le 2M}}\left| a_m
\right|^2. 
\end{split}
\end{equation}
\end{theorem}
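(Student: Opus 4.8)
The plan is to follow the now-standard route to a cubic large sieve pioneered by Heath--Brown and used for cubic characters in \cite{B&Y}, with the essential complication that the cubic residue symbol is defined in $F=\mq(\zeta_{12})$ rather than in $K=\mq(i)$, since $K$ contains no nontrivial cube root of unity. The three bounds inside the minimum will be established separately, each being superior in a different range of $Q$ against $M$, and the asserted estimate is simply their pointwise minimum.

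First I would pass from the sum over primitive cubic characters to the basic bilinear form attached to the cubic residue symbol itself. Writing a primitive cubic character modulo squarefree $q$ as a product $\prod_i\leg{\cdot}{\varpi_i}_3^{e_i}$ over the prime divisors $\varpi_i\mid q$ with each $e_i\in\{1,2\}$, an inclusion--exclusion over these prime factors reduces the left side of \eqref{final} to sums of the shape
\[
\sum_{(q)}\Bigl|\sumstar_{m}a_m\leg{m}{q}_3\Bigr|^2\,\Phi\!\left(\frac{N_K(q)}{Q}\right),
\]
where $\Phi$ is a fixed smooth majorant for the interval $(Q,2Q]$ and the squarefreeness and coprimality conditions on $q$ are relaxed by a routine M\"obius inversion over $\mathcal{O}_K$. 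Opening the square leaves, for each pair $m,n$, the character sum $\sum_{(q)}\leg{mn^2}{q}_3\,\Phi(N_K(q)/Q)$ to be understood, since $\leg{m}{q}_3\,\overline{\leg{n}{q}_3}=\leg{mn^2}{q}_3$.

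Next I would apply the cubic reciprocity law in $\mathcal{O}_F$ to interchange $\leg{mn^2}{q}_3$ and $\leg{q}{mn^2}_3$. After normalizing $q$ and $k:=mn^2$ to be primary, the unit and congruence correction factors are supported only at the primes above $2$ and $3$ and, because $(q,6)=(m,6)=(n,6)=1$, can be absorbed harmlessly. I then apply Poisson summation to the now-free variable $q$ along residue classes modulo $k$: the zero frequency contributes the diagonal main term, which is nonzero only when $k$ is a cube (that is, when $m=n$ up to cubes) and accounts for the terms linear in $M$, while the nonzero frequencies are expressed through cubic Gauss sums whose explicit evaluation and square-root size govern the pure powers of $Q$.

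The crux is estimating the off-diagonal contribution sharply. Here I would exploit the square-root cancellation of the cubic Gauss sums together with a further Cauchy--Schwarz or large-sieve step to bound the remaining bilinear sum in $m$ and $n$; the three exponents $5/3$, $4/3$ and $11/9$ emerge from three ways of apportioning the Gauss-sum savings between the $q$-sum and the $(m,n)$-sum, with duality --- legitimate because reciprocity renders the bilinear form essentially symmetric --- supplying the complementary $M$-dependence in each case. I expect the main obstacle to be extracting enough cancellation from the sums of Gauss sums to reach the exponent $11/9$ rather than the weaker values that a direct transcription of Heath--Brown's $\mq(\omega)$ large sieve would give; this forces a careful treatment of the modulus $k=mn^2$ when $m$ and $n$ are not coprime, together with a precise analysis of the arithmetic of $\mz[i]$ inside $\mz[\zeta_{12}]$.
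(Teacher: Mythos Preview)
Your plan has a structural obstruction at the Poisson step. The primitive cubic characters of conductor $q$ in $\mathcal{O}_K$ are not parametrised by $q\in\mathcal{O}_K$ alone but, via Lemma~\ref{lemma:cubicclass}, by squarefree $n\in\mathcal{O}_F$ with $N_{F/K}(n)=q$; the sum over $(q,\chi)$ is really a sum over such $n$. To carry out Poisson summation on that variable one must replace a sum over ideals of $\mathcal{O}_F$ by a sum over elements, and this fails because $U_F=\langle\zeta_{12},\epsilon\rangle$ is infinite. The paper notes exactly this obstruction at the end of Section~\ref{sec1}. In particular, the expression $\leg{mn^2}{q}_3$ you write down is not a cubic residue symbol in $K$ (there are none, since $K$ has no primitive cube root of unity), and after reciprocity the ``free variable'' you want to Poisson-sum lives in $\mathcal{O}_F$, not in $\mathcal{O}_K$.

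The paper avoids this by working on the dual side: one introduces the norm $C_2(M,Q)$, opens the square in $n_1,n_2\in\mathcal{O}_F$, and applies Poisson summation to the \emph{argument} variable $m\in\mathcal{O}_K$ (where $U_K$ is finite, so Lemma~\ref{Poissonsum} applies). The resulting Gauss sums are related to $g_{3,F}$ via \eqref{wchi}, and the remaining bilinear form in $n_1,n_2$ is then bounded not by further Gauss-sum manipulations but by invoking the cubic large sieve in $\mathcal{O}_F$ already established in \cite{BGL} (Lemma~\ref{cubicls}); this is the key external input you are missing, and it is what produces the base estimate $C_2(M,Q)\ll (QM)^{\varepsilon}(M+Q^{5/3})$. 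Finally, the three bounds in \eqref{final} do not come from ``three ways of apportioning the Gauss-sum savings'' but from a single H\"older-type recursion $C_2(M,Q)\ll M^{\varepsilon}Q^{1-1/v}\sum_{j<v}C_2(2^jM^v,Q)^{1/v}$ (Lemma~\ref{normlemma}), taking $v=1,2,3$.
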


   Our proofs of the results in this paper follow along the lines of treatment given by Baier and Young in \cite{B&Y}. However, as the group of unit $U_F$ in $\mathcal{O}_F$ is infinite, we are not able to replace the summation over ideals by the summation over elements in $\mathcal{O}_F$ to apply Poisson summation to study the left-hand side expression of \eqref{final}. This is the main reason that the bounds in Theorem~\ref{secmom} and Corollary~\ref{coro:nonvanish} are weaker than their analogues in \cite{B&Y}.

\subsection{Notations} The following notations and conventions are used throughout the paper.\\
\noindent $e(z) = \exp (2 \pi i z) = e^{2 \pi i z}$. \newline
$f =O(g)$ or $f \ll g$ means $|f| \leq cg$ for some unspecified
positive constant $c$. \newline
$\mu_{K}$ denotes the M\"obius function on $\mathcal{O}_K$. \newline
$\mu_{F}$ denotes the M\"obius function on $\mathcal{O}_F$. \newline
$\zeta_{F}(s)$ is the Dedekind zeta function for the field $F$.

\section{Preliminaries}
\label{sec 2}

In this section we provide various tools used throughout the paper.
\subsection{Notations and Facts on Number Fields}
\label{sec2.0}

Here we introduce some basic notations and facts on number fields.  For an arbitrary number field $E$, set $d_E=[E:\mq]$. We call an element of $\mathcal{O}_E$ an $E$-rational integer and simply say a rational integer for any $\mq$-rational integer. Let $E'$ be another number field such that $E' \subset E$, we use $N_{E/E'}$ and $\text{Tr}_{E/E'}$ to denote the relative norm and trace from $E$ to $E'$ respectively and use $N_E$ for $N_{E/\mq}$ and  $\text{Tr}_{E}$ for $\text{Tr}_{E/\mq}$. \newline

In the case that $E$ has class number one, we denote $(\delta_E)$ and $(D_E)$ for the different and discriminant of $E$, respectively. We shall further fix generators $\delta_E, D_E$ satisfying $N_E(\delta_E)=D_E$ (see \cite[Chap.III, Theorem 2.9]{Newkirch}) and say that $\delta_E$ ($D_E$) is the different (discriminant) of $E$. It is well-known that a rational integer ramifies in $E$ if and only if it divides $D_E$ (see \cite[Chap. III, Corollary 2.12]{Newkirch}).  $\mu_E$ stands for the M\"obius function on $\mathcal{O}_E$ and we define $\mu_E(a)=\mu_E((a))$ for every $a \in \mathcal{O}_E$.  We also define $\widetilde{e}_E(k) =e(\mathrm{Tr}_E(\frac {k}{\delta_E}))$ for any $k \in E$. \newline

    Recall that $K=\ratq(i)$ and $F=\ratq(\zeta_{12})$.  We have $U_K=\langle i \rangle$ and $U_F=\langle \zeta_{12}, \epsilon \rangle$, where $\varepsilon=\frac {1+\sqrt{3}}{1-i}$ is a fundamental unit (see \cite[p. 221]{Lemmermeyer}). We fix $\delta_K=2i$ so that $D_K=-4$ and $\delta_F=2\sqrt{3}i$ so that $D_F=144$. It follows that the only primes in $\mq$ that ramify in $F$ are $2$ and $3$, in which case we have $(2)=(1+i)^2, 3=(\sqrt{3})^2$ with $1+i$ and $\sqrt{3}$ being the only primes in $F$ lying above $2$ and $3$ respectively. \newline

     Note that $F=\mq(i, \omega)$ is Galois over $\mq$. We shall denote the Galois group of $F$ over $\mq$ by $\langle \tau \rangle \times \langle \sigma \rangle$, where $\tau$ maps $i$ to $-i$ and $\sigma$ sends $\sqrt{3}i$ to $-\sqrt{3}i$. The Galois group of $F$ over $K$ thus equals $\langle \sigma \rangle$. As $F=K\mq(\omega)$, where $K$ and $\mq(\omega)$ are linearly disjoint with relatively prime discriminants, we conclude by \cite[Proposition 17, p. 68]{La} that $\mathcal{O}_F=\mathcal{O}_K[\omega]$.  It follows that an integral basis of $\mathcal{O}_F$ over $\mz$ can be taken to be $\{ 1, i, \omega, \omega i \}$. Expressing $\sqrt{3}$ using this basis, we see that every element $n \in \mathcal{O}_F$ satisfying $n \equiv 1 \pmod 3$ can be written as $n=a+b\omega+c\sqrt{3}+d\omega \sqrt{3}$ with $a, b, c, d \in \mz$ satisfying $a-1 \equiv b \equiv c+d \equiv 0 \pmod 3$ (see \cite[exercise 7.3]{Lemmermeyer}). It is known that every ideal co-prime to $3$ in $\mathcal{O}_F$ has a generator which is $\equiv 1 \pmod {3}$ (see \cite[p. 221]{Lemmermeyer}). As such generators may not be unique, we shall now henceforth fix a generator $\equiv 1 \pmod {3}$ for every prime ideal of $\mathcal{O}_F$ together with $1$ as the generator for $\mathcal{O}_F$ itself and extend to any composite $n$ with $(N_F(n), 3)=1$ multiplicatively. We call these generators primary. In particular, we note that if we write $n=n_1n_2$ with both $n$ and $n_1$ being primary, then $n_2$ is also primary.

\subsection{Primitive cubic characters}
\label{sec2.4}
Suppose that $E$ is an arbitrary number field and let $n \in \natn$ with $n \geq 2$.  Let $\mu_n(E)=\{ \zeta \in E^{\times}: \zeta^n=1 \}$ and suppose that $\mu_n(E)$ has $n$ elements. Note that the discriminant of $x^n-1$ is divisible only by the primes dividing $n$ in $\mathcal{O}_E$. It follows that for any prime $\varpi \in \mathcal{O}_E,  (\varpi, n)=1$, the map
\begin{align*}
    \zeta \mapsto \zeta \pmod \varpi : \mu_n(E) \rightarrow \mu_n(\mathcal{O}_E/\varpi)=\{ \zeta \in (\mathcal{O}_E/\varpi)^{\times}: \zeta^n=1 \}
\end{align*}
   is bijective. For such $\varpi$,  we define the $n$-th power residue symbol in $E$ for
   $a \in \mathcal{O}_E$, $(a, \varpi)=1$ by $\leg{a}{\varpi}_{n, E} \equiv
a^{(N_E(\varpi)-1)/n} \pmod{\varpi}$, with $\leg{a}{\varpi}_n \in \mu_n(E)$. When
$\varpi | a$, we define
$\leg{a}{\varpi}_{n, E} =0$.  Then these symbols can be extended
to any composite $c$ with $(N_E(c), n)=1$ multiplicatively. We further define $\leg {\cdot}{c}_{n, E}=1$ when $c \in U_E$. For any $1 \leq l \leq n$, note that
$\leg {\cdot}{c}^l_{n, E}=$ is an $n/(n,l)$-th residue symbol. \newline

    In the remainder of this paper, we shall reserve the symbol $\leg {\cdot}{\cdot}_3$ for the cubic residue symbol in $F$. It follows from \cite[Proposition 4.2, i)]{Lemmermeyer} that for all $m, n \in \mathcal{O}_F, (n, 3)=1$,
\begin{align}
\label{cubicconj}
 \leg{m}{n}^{\sigma}_3 = \leg{m^{\sigma}}{n^{\sigma}}_3.
\end{align}

    Recall that  the cubic reciprocity law (see \cite[Theorem 7.12]{Lemmermeyer}) states
that for two integers  $m, n \in \mathcal{O}_F$ satisfying $m \equiv 1 \pmod {3}, n \equiv 1 \pmod 3$,
\begin{align*}
 \leg{m}{n}_3 = \leg{n}{m}_3.
\end{align*}
  Also, the supplement laws to the cubic reciprocity law \cite[Theorem 7.12]{Lemmermeyer} imply that for $n=a+b\omega+c\sqrt{3}+d\omega \sqrt{3} \equiv 1 \pmod 3$,
\begin{align}
\label{cubicatunits}
  \leg {i}{n}_3=1, \quad \leg {\omega}{n}_3=\omega^{(a+b-1)/3}, \quad \leg {1-\omega}{n}_3=\omega^{(1-a)/3}, \quad \leg {\epsilon}{n}_3=\omega^{-c}.
\end{align}
  From this we deduce that
\begin{align*}
  \leg {\sqrt{3}}{n}_3=\omega^{b/3}.
\end{align*}

    This implies that for any $n \equiv 1 \pmod {9}$, we have
   $\leg {\epsilon}{n}_3=\leg {\sqrt{3}}{n}_3=1$.
   It follows from this that the functions $n \rightarrow \leg {\epsilon}{n}_3, n \rightarrow \leg {\sqrt{3}}{n}_3$ can be regarded as Hecke characters $\pmod {9}$ of trivial infinite type. Similarly, one sees that the function $n \rightarrow \leg {1+i}{n}_3$ can be regarded as a Hecke character $\pmod {18}$ of trivial infinite type. \newline

   Let $\varpi \in \mathcal{O}_K$ be a prime satisfying $(
   \varpi, 6)=1$. As $\mathcal{O}_F=\mathcal{O}_K[\omega]$ and the minimal polynomial for $\omega$ over $K$ is $x^2-x+1$, it follows from \cite[Proposition 2.14]{Wa} that $\varpi$ splits in $\mathcal{O}_F$ if and only if $\leg {-3}{\varpi}_{2,K}=1$. There are two types of primes in $\mathcal{O}_K$ co-prime to $6$. One is the set of primes whose norms are congruent to $1$ modulo $4$. Let $\varpi$ be such a prime, then it follows from \cite[Proposition 4.2, ii)]{Lemmermeyer} that
\begin{align*}
 \leg{-3}{\varpi}_{2,K} = \leg{-3}{N_K(\varpi)}_{\mz},
\end{align*}
   where $\leg {\cdot}{\cdot}_{\mz}$ denotes the Jacobi symbol. We then deduce that in this case $\leg {-3}{\varpi}_{2,K}=1$ if and only if $N_K(\varpi) \equiv 1 \pmod 3$. The other type of primes in $\mathcal{O}_K$ is the set of primes that are also primes in $\mq$, i.e. those rational primes that are congruent to $3$ modulo $4$. For such a prime $p \in \mz$, it follows from  \cite[Proposition 4.2, iii)]{Lemmermeyer} gives that
\begin{align*}
 \leg{-3}{p}_{2,K} = \leg{N_K(-3)}{p}_{\mz}=1.
\end{align*}
   Hence in this case all such primes split in $\mathcal{O}_F$. As in this case $N_K(p)=p^2 \equiv 1 \pmod 3$, the discussions above give that any prime $\varpi$ in $\mathcal{O}_K$ co-prime to $6$ splits in $\mathcal{O}_F$ if and only if its norm is congruent to $1$ modulo $3$. We then deduce that if $(\varpi, 6)=1$, cubic characters modulo $\varpi$ exists if and only if $\varpi$ splits in $\mathcal{O}_F$. For such a $\varpi$ that splits in  $\mathcal{O}_F$,  we write $\varpi=\mathfrak{p}\mathfrak{p}^{\sigma}=N_{F/K}(\mathfrak{p})$ with $\mathfrak{p}$ a prime in $\mathcal{O}_F$. Then the cubic symbol $\leg {\cdot}{\mathfrak{p}}_3$ gives rise to a cubic character modulo $\varpi$ by noting that $\mathcal{O}_K / \varpi \approx  \mathcal{O}_F / \mathfrak{p}$. Note also that by \eqref{cubicconj}, we have for all $a \in \left (\mathcal{O}_K / \varpi \right )^{\times}$,
\begin{align*}
 \leg{a}{\mathfrak{p}^{\sigma}}_3 = \overline{\leg{a}{\mathfrak{p}}}_3.
\end{align*}
   Thus the cubic symbol $\leg {\cdot}{\mathfrak{p}^{\sigma}}_3$ gives rise to another cubic character modulo $\varpi$ and therefore there is a one-to-one correspondence between primitive cubic characters modulo $\varpi$ and cubic residue symbols  modulo $\mathfrak{p}$ with $N_{F/K}(\mathfrak{p})=\varpi$.  By multiplicity, we extend this one-to-one correspondence to the following classification of all primitive cubic characters of conductor $q \in \mathcal{O}_K$ co-prime to $6$:
\begin{lemma}
\label{lemma:cubicclass}
 The primitive cubic characters of conductor $(q)$ with $q \in \mathcal{O}_K$ co-prime to $6$ are of the form $\chi_n:m \rightarrow (\frac{m}{n})_3$ for some $n \in \mathcal{O}_F$, $n$ square-free and not divisible by any $K$-rational primes, such that $(N_{F/K}(n)) = (q) \subset \mathcal{O}_K$.
\end{lemma}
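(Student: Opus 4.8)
The plan is to assemble the lemma from the two facts already in place in this section: the structural decomposition of a primitive cubic character recalled in the introduction, and the single-prime correspondence derived immediately above the statement. First I would start from a primitive cubic character $\chi$ of conductor $(q)$ with $(q,6)=1$. By the discussion in the introduction, the very existence of such a $\chi$ forces $q=\prod_{i=1}^{k}\varpi_i$ to be a product of distinct primes $\varpi_i\in\mathcal{O}_K$ with $N_K(\varpi_i)\equiv 1\pmod 3$, and it forces $\chi=\prod_{i=1}^{k}\chi_i$ with each $\chi_i$ a primitive cubic character modulo $\varpi_i$.

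Next I would invoke the splitting criterion established just before the statement: since $N_K(\varpi_i)\equiv 1\pmod 3$, each $\varpi_i$ splits in $\mathcal{O}_F$, say $\varpi_i=\mathfrak{p}_i\mathfrak{p}_i^{\sigma}=N_{F/K}(\mathfrak{p}_i)$ with $\mathfrak{p}_i$ a prime of $\mathcal{O}_F$ and $\mathfrak{p}_i\neq\mathfrak{p}_i^{\sigma}$. The one-to-one correspondence between primitive cubic characters modulo $\varpi_i$ and cubic residue symbols modulo a prime of $\mathcal{O}_F$ above $\varpi_i$ then identifies $\chi_i$ with $\leg{\cdot}{n_i}_3$ for a unique $n_i\in\{\mathfrak{p}_i,\mathfrak{p}_i^{\sigma}\}$. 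Setting $n=\prod_{i=1}^{k}n_i$ and using multiplicativity of the cubic symbol in its lower argument gives $\chi=\leg{\cdot}{n}_3=\chi_n$ on $\mathcal{O}_K$. It then remains to check that $n$ meets the three stated conditions: the $\varpi_i$ are distinct and each $n_i$ is a split prime of $\mathcal{O}_F$, so the $n_i$ are distinct primes and $n$ is square-free; each $n_i=\mathfrak{p}_i\neq\mathfrak{p}_i^{\sigma}$ is not fixed by $\sigma$, hence is not a $K$-rational prime, so $n$ is divisible by no $K$-rational prime; and $N_{F/K}(n)=\prod_{i=1}^{k}N_{F/K}(\mathfrak{p}_i)=\prod_{i=1}^{k}\varpi_i$, whence $(N_{F/K}(n))=(q)$.

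For the genuine bijection asserted by the classification I would also run the argument in reverse: given $n$ satisfying the three conditions, its factorization consists of distinct split primes $\mathfrak{p}_i$ lying over distinct $\varpi_i$ with $N_K(\varpi_i)\equiv 1\pmod 3$; each symbol $\leg{\cdot}{\mathfrak{p}_i}_3$ descends, via $\mathcal{O}_K/\varpi_i\cong\mathcal{O}_F/\mathfrak{p}_i$, to a primitive cubic character modulo $\varpi_i$, and their product $\chi_n$ is a primitive cubic character of conductor exactly $(q)=(N_{F/K}(n))$. Here the conductor is exactly $(q)$, not a proper divisor, because each local factor is nontrivial modulo $\varpi_i$ and the $\varpi_i$ are pairwise coprime.

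The main obstacle is bookkeeping rather than any deep input: I must be sure the three conditions on $n$ match precisely "a product of distinct split primes, one above each $\varpi_i$", and in particular that they exclude every degenerate possibility. Square-freeness rules out repeated $\mathfrak{p}_i$; the prohibition on $K$-rational prime divisors excludes the inert primes (those $\varpi$ with $N_K(\varpi)\equiv 2\pmod 3$), which support no cubic character; and the case $n=\mathfrak{p}_i\mathfrak{p}_i^{\sigma}$, which would collapse $\leg{\cdot}{n}_3$ to a principal character via $\leg{a}{\mathfrak{p}_i^{\sigma}}_3=\overline{\leg{a}{\mathfrak{p}_i}}_3$, is automatically excluded since it would force $N_{F/K}(n)$ to contain $\varpi_i^2$, contradicting the square-freeness of the conductor of a primitive cubic character and the equality $(N_{F/K}(n))=(q)$. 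Once these matchings are verified, the lemma follows by combining multiplicativity with the per-prime correspondence.
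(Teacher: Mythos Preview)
Your proposal is correct and follows exactly the route the paper intends: the paper does not give a separate proof but simply says that one extends ``by multiplicity'' the single-prime correspondence established in the discussion preceding the lemma, together with the structural decomposition of primitive cubic characters from the introduction. Your write-up is a careful unpacking of precisely that argument, including the verification that the three conditions on $n$ match the required prime-by-prime data.
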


\subsection{The Gauss sums}

Let $E$ be a number field with class number $1$ and $\chi$ be a character modulo $q$ with $q \in \mathcal{O}_E$. For $r \in \mathcal{O}_E$, we define
the Gauss sum $g(r, \chi)$ by
\begin{align*}
   g_E(r, \chi) = \sum_{x \bmod{n}} \chi(x) \widetilde{e}_E\leg{rx}{n}.
\end{align*}

    We write $g_E(\chi)$ for $g_E(1,\chi)$. Similar to the classical case shown in \cite[Chap. 9]{Da}, we have the following relation when $\chi$ is primitive:
\begin{align}
\label{grg1}
   g_E(r, \chi) = \overline{\chi} (r) g_E(\chi).
\end{align}

   In the special case of $E=F$, we define for any positive integer $l \geq 1$, $n, r \in  \mathcal{O}_F$ and $(n,6)=1$,
\begin{align*}
 g_{3,l, F}(r,n) = \sum_{x \bmod{n}} \leg{x}{n}^l_3 \widetilde{e}_F\leg{rx}{n}.
\end{align*}
   We write $g_{3, F}(r,n)$ for $g_{3,1, F}(r,n)$, $g_{3,l,F}(n)$ for $g_{3,l,F}(1,n)$ and $g_{3,F}(n)$ for $g_{3,F}(1,n)$. \newline

  When $(l, 3)=1$,  the following well-known relation (see \cite[p. 195]{P}) holds for all $n$:
\begin{align}
\label{2.1}
   |g_{3,l,F}(n)|& =\begin{cases}
    \sqrt{N_F(n)} \qquad & \text{if $n$ is square-free}, \\
     0 \qquad & \text{otherwise}.
    \end{cases}
\end{align}

  We list some properties of $g_{3,l, F}(n)$ in the following lemma, whose proof is omitted as it is similar to that of \cite[Lemma 2.4]{G&Zhao4}.
\begin{lemma} \label{quarticGausssum}
   For any $n, n_1, n_2 \in \mathcal{O}_F$ and any prime $\mathfrak{p} \in \mathcal{O}_F$ satisfying $(\mathfrak{p}nn_1n_2, 3)=1$, we have
\begin{align}
\label{eq:gmult}
 g_{3,F}(rs,n) & = \overline{\leg{s}{n}}_3 g_{3,F}(r,n), \quad (rs,n)=1, \\
\label{2.03}
   g_{3,F}(r,n_1 n_2) &=\leg{n_2}{n_1}_3\leg{n_1}{n_2}_3g_{3,F}(r, n_1) g_{3,F}(r, n_2), \quad (n_1, n_2) = 1, \\
\label{2.04}
g_{3,F}(\mathfrak{p}^k, \mathfrak{p}^l)& =\begin{cases}
    N_F(\mathfrak{p})^kg_{3,l,F}(\mathfrak{p}) \qquad & \text{if} \qquad l= k+1, \\
      \varphi(\mathfrak{p}^l)=\#(\mathcal{O}_F/(\mathfrak{p}^l))^* \qquad & \text{if} \qquad  k \geq l, l \equiv 0 \pmod {3},\\
      0 \qquad & \text{otherwise}.
\end{cases}
\end{align}
\end{lemma}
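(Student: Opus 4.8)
The plan is to establish the three identities \eqref{eq:gmult}, \eqref{2.03} and \eqref{2.04} by direct manipulation of the exponential sum defining $g_{3,F}$, treating \eqref{eq:gmult} as the basic tool from which the reciprocity factors in \eqref{2.03} will emerge. Throughout I would use that $\leg{\cdot}{n}_3$ is a character of $(\mathcal{O}_F/(n))^{\times}$ with values in the cube roots of unity, that it is multiplicative in the lower entry so that $\leg{\cdot}{\mathfrak{p}^{l}}_3 = \leg{\cdot}{\mathfrak{p}}_3^{l}$, and that $\widetilde{e}_F(y) = 1$ whenever $y \in \mathcal{O}_F$. I would also use that, since $(\mathfrak{p},6)=1$, the prime $\mathfrak{p}$ is coprime to the different $\delta_F = 2\sqrt{3}i$, so that $y \mapsto \widetilde{e}_F(y/\mathfrak{p})$ is a nontrivial additive character of $\mathcal{O}_F/(\mathfrak{p})$.

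For \eqref{eq:gmult}, since $(s,n)=1$ I would substitute $x \mapsto s' x$ in the sum defining $g_{3,F}(rs,n)$, where $s'$ is an inverse of $s$ modulo $n$; this is a bijection on residues mod $n$ and sends $\widetilde{e}_F(rsx/n)$ to $\widetilde{e}_F(rx/n)$ while pulling out the factor $\leg{s'}{n}_3$. As $\leg{s}{n}_3\leg{s'}{n}_3 = \leg{1}{n}_3 = 1$ and the values lie on the unit circle, $\leg{s'}{n}_3 = \overline{\leg{s}{n}}_3$, which gives \eqref{eq:gmult}. For \eqref{2.03} I would then apply the Chinese Remainder Theorem to write $x \equiv x_1 n_2 n_2' + x_2 n_1 n_1' \pmod{n_1 n_2}$ with $x_i$ running mod $n_i$, where $n_2 n_2' \equiv 1 \pmod{n_1}$ and $n_1 n_1' \equiv 1 \pmod{n_2}$. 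Using multiplicativity of the symbol in the lower entry together with $\leg{x}{n_1}_3 = \leg{x_1}{n_1}_3$, $\leg{x}{n_2}_3 = \leg{x_2}{n_2}_3$, and the additivity of $\widetilde{e}_F$, the sum factors as $g_{3,F}(r n_2',n_1)\,g_{3,F}(r n_1',n_2)$. Applying \eqref{eq:gmult} to each factor extracts $\overline{\leg{n_2'}{n_1}}_3 = \leg{n_2}{n_1}_3$ and $\overline{\leg{n_1'}{n_2}}_3 = \leg{n_1}{n_2}_3$, yielding \eqref{2.03}.

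For \eqref{2.04} I would first reduce $\leg{x}{\mathfrak{p}^l}_3$ to $\leg{x}{\mathfrak{p}}_3^{l}$ and rewrite the character as $\widetilde{e}_F(x/\mathfrak{p}^{l-k})$, then split into cases. If $l=k+1$ the summand depends only on $x \bmod \mathfrak{p}$, so collapsing the $N_F(\mathfrak{p})^{k}$ lifts gives $N_F(\mathfrak{p})^{k} g_{3,l,F}(\mathfrak{p})$. If $k \ge l$ the additive character is trivial and the sum reduces to $\sum_{x \bmod \mathfrak{p}^l}\leg{x}{\mathfrak{p}}_3^{l}$, which equals $\varphi(\mathfrak{p}^l)$ when $3 \mid l$ and vanishes otherwise. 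The remaining range $l \ge k+2$ is the crux: here I would use the substitution $x \mapsto x\bigl(1 + \mathfrak{p}^{\,l-k-1} u\bigr)$ for $u$ mod $\mathfrak{p}$, which fixes the symbol (since $1 + \mathfrak{p}^{\,l-k-1}u \equiv 1 \bmod \mathfrak{p}$) but multiplies the summand by $\widetilde{e}_F(xu/\mathfrak{p})$; averaging the resulting identity over $u$ forces the Gauss sum to vanish because $\sum_{u \bmod \mathfrak{p}}\widetilde{e}_F(xu/\mathfrak{p}) = 0$ for $x$ coprime to $\mathfrak{p}$.

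I expect this last case to be the main obstacle, both because it requires the averaging device and because one must check that $y \mapsto \widetilde{e}_F(y/\mathfrak{p})$ is genuinely nontrivial modulo $\mathfrak{p}$; the analogous bookkeeping with the different $\delta_F$ when splitting the additive character in the CRT step of \eqref{2.03} is the other point demanding care.
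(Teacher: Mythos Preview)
Your argument is correct and is the standard direct computation; the paper itself omits the proof entirely, deferring to \cite[Lemma~2.4]{G&Zhao4}, so there is nothing to compare against beyond noting that your route is precisely the classical one such a citation would point to.

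Two small points of bookkeeping are worth tightening. First, when you invoke \eqref{eq:gmult} inside the proof of \eqref{2.03}, the hypothesis as stated is $(rs,n)=1$, whereas you only know $(n_2',n_1)=1$ and $(n_1',n_2)=1$ with no control on $r$; your substitution argument for \eqref{eq:gmult} in fact establishes the identity under the weaker hypothesis $(s,n)=1$, so you should say this explicitly before using it. Second, you justify nontriviality of $y\mapsto\widetilde{e}_F(y/\mathfrak{p})$ by assuming $(\mathfrak{p},6)=1$, but the lemma only gives $(\mathfrak{p},3)=1$. In fact no coprimality with the different is needed here: $\widetilde{e}_F(x/\mathfrak{p})=1$ for all $x\in\mathcal{O}_F$ would force $(1/\mathfrak{p})\mathcal{O}_F\subset\mathcal{O}_F$, which fails for any prime $\mathfrak{p}$. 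With these two clarifications your proof goes through as written.
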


    We note that it follows from \eqref{eq:gmult}, \eqref{2.04} and the cubic reciprocity that for primary $n_1, n_2$ satisfying $(n_1, n_2)=1$,
\begin{align}
\label{eq:gtwisted}
   g_{3,F}(r,n_1 n_2) =\overline{\leg{n_2}{n_1}}_3 g_{3,F}(r, n_1) g_{3,F}(r, n_2)=g_{3,F}(n_2r, n_1) g_{3,F}(r, n_2).
\end{align}

    When $\chi$ is a primitive cubic character modulo $q$ with $q \in \mathcal{O}_K, (q, 6)=1$ , we have
\begin{align*}
    g_K(\chi) &= \sum_{a \bmod{q}}\chi(a)e\Big ( \text{Tr}_K\Big (\frac {a}{2iq}\Big )\Big ) = \sum_{a \bmod{N_{F/K}(n)}}\leg{a}{n}_3 e\Big ( \text{Tr}_{K} \Big (\frac {a}{2iN_{F/K}(n)}\Big )\Big ),
\end{align*}
   where the last equality above follows from Lemma \ref{lemma:cubicclass} with any $n \in \mathcal{O}_F, N_{F/K}(n)=q$, $n$ square-free and not divisible by any $K$-rational primes. Now we write $a=xn^{\sigma}+x^{\sigma}n \pmod {nn^{\sigma}}$, where $x$ varies over a set of representatives in $\mathcal{O}_F$ modulo $n$.  It is easy to see that as $x$ varies $\pmod n$, $a$ varies $\pmod q$. We then deduce via \eqref{eq:gmult} that
\begin{align*}
  g_K(\chi) &= \sum_{x \bmod{n}}\leg{xn^{\sigma}}{n}_3 e\Big ( \text{Tr}_K\left(\text{Tr}_{F/K}\left (\frac {x}{2i n}\right )\right )\Big )
  = \sum_{x \bmod{n}}\leg{xn^{\sigma}}{n}_3 e\Big ( \text{Tr}_F\left (\frac {x}{2i n}\right )\Big ) \\
  &=\overline{\leg{\sqrt{3}}{n}}_3\leg{n^{\sigma}}{n}_3\sum_{x \bmod{n}}\leg{x}{n}_3 e\Big ( \text{Tr}_F\left (\frac {x}{2\sqrt{3}i n}\right )\Big )=\overline{\leg{\sqrt{3}}{n}}_3\leg{n^{\sigma}}{n}_3g_{3,F}(n).
\end{align*}

   It follows from \eqref{cubicconj} and the cubic reciprocity law that for primary $n$,
\begin{align*}
   \leg {n^{\sigma}}{n}_3=\leg {n}{n^{\sigma}}^{\sigma}_3=\leg {n^{\sigma}}{n}^{\sigma}_3.
\end{align*}
   This implies that $\leg {n^{\sigma}}{n}_3 \in K$ and hence equals $1$. We then conclude that
\begin{align}
\label{wchi}
  g_K(\chi) =\overline{\leg{\sqrt{3}}{n}}_3g_{3,F}(n).
\end{align}

\subsection{Poisson Summation}
   Our proof of Theorem \ref{cubiclargesieve} needs the following Poisson summation formula.
\begin{lemma}
\label{Poissonsum} Let $n \in \mathcal{O}_K$ and let $\chi$ be a character $\pmod {n}$. For any Schwartz class function $W$,  we have for $X>0$,
\begin{align}
\label{PoissonsumQw}
   \sum_{m \in \mathcal{O}_K}\chi(m)W\left(\frac {N_K(m)}{X}\right)=\frac {X}{N_K(n)}\sum_{k \in
   \mathcal{O}_K}g_K(k,\chi)\widetilde{W}_K\left(\sqrt{\frac {N_K(k)X}{N_K(n)}}\right),
\end{align}
   where
\begin{align*}
   \widetilde{W}_K(t) &=\int\limits^{\infty}_{-\infty}\int\limits^{\infty}_{-\infty}W(N_K(x+yi))e\left(- t\text{Tr}_K(\frac {x+yi}{2i})\right)\dif x \dif y, \quad t
   \geq 0.
\end{align*}
\end{lemma}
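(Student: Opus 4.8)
The plan is to carry out Poisson summation for the lattice $\mathcal{O}_K = \mz[i] \subset \mc \cong \mr^2$ in the classical way, the only genuine work being to match the Euclidean Fourier pairing produced by Poisson with the trace pairing that defines $\widetilde{e}_K$ and the Gauss sum $g_K(k,\chi)$. First I would break the sum over $m \in \mathcal{O}_K$ into residue classes modulo $n$: writing $m = a + n\ell$ with $a$ running over a complete set of residues $\bmod\, n$ and $\ell$ over $\mathcal{O}_K$, and using $\chi(a+n\ell)=\chi(a)$, the left-hand side becomes $\sum_{a \bmod n}\chi(a)\sum_{\ell \in \mathcal{O}_K} W(N_K(a+n\ell)/X)$. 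Identifying $\ell = \ell_1+\ell_2 i$ with $(\ell_1,\ell_2)\in\mz^2$, the inner summand is a Schwartz function of $\ell$ on $\mr^2$, since $N_K(a+n\ell)$ is a positive-definite quadratic form in $(\ell_1,\ell_2)$; hence the two-dimensional Poisson summation formula $\sum_{\ell \in \mz^2} f(\ell) = \sum_{k \in \mz^2}\widehat f(k)$ applies, with $\widehat f(k) = \int_{\mr^2} f(t)\, e(-(t_1k_1+t_2k_2))\,\dif t$ and no covolume factor, as $\mz^2$ is unimodular.

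Next I would evaluate the Fourier integral. Writing $t,k$ as complex numbers and using $t_1k_1+t_2k_2 = \mathrm{Re}(t\bar k)$, I substitute $u = a+nt$ (so $\dif u = N_K(n)\,\dif t$), which factors off the $a$-dependence as a pure additive character:
\[
 \widehat f_a(k) = \frac{1}{N_K(n)}\, e\!\left(\mathrm{Re}\,\tfrac{a\bar k}{n}\right)\int_{\mc} W\!\left(\tfrac{N_K(u)}{X}\right) e\!\left(-\mathrm{Re}\,\tfrac{u\bar k}{n}\right)\dif u .
\]
The elementary identity underlying everything is $\text{Tr}_K(z/2i) = \mathrm{Im}(z)$ for $z \in \mc$, so that $\widetilde{e}_K(w) = e(\mathrm{Im}\, w)$; combined with $\mathrm{Re}(\zeta)=\mathrm{Im}(i\zeta)$ this gives $e(\mathrm{Re}\,\tfrac{a\bar k}{n}) = \widetilde{e}_K\!\left(\tfrac{a(i\bar k)}{n}\right)$, whence $\sum_{a \bmod n}\chi(a)\,e(\mathrm{Re}\,\tfrac{a\bar k}{n}) = g_K(i\bar k,\chi)$.

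Finally I would treat the remaining integral. Scaling $u = \sqrt X\,v$ produces the prefactor $X/N_K(n)$, and since $W(N_K(\cdot))$ is radially symmetric, rotating $v$ shows that $\int_{\mc} W(N_K(v))\, e(-\mathrm{Re}(\sqrt X\, v\bar k/n))\,\dif v$ depends on $k$ only through $|k|/|n| = \sqrt{N_K(k)/N_K(n)}$ and equals $\widetilde{W}_K\!\left(\sqrt{N_K(k)X/N_K(n)}\right)$; the interchange of $\mathrm{Re}$ and $\mathrm{Im}$ relative to the definition of $\widetilde{W}_K$ is another consequence of this rotational invariance. Assembling the pieces yields the claimed identity but with $g_K(i\bar k,\chi)$ and $N_K(i\bar k)$ in place of $g_K(k,\chi)$ and $N_K(k)$; since $k \mapsto i\bar k$ is a norm-preserving involution of $\mathcal{O}_K$, relabelling the summation index recovers exactly \eqref{PoissonsumQw}. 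The one step requiring care, and the only point where the arithmetic of $K$ really enters, is this matching of the Euclidean pairing $\mathrm{Re}(a\bar k/n)$ with the trace pairing $\widetilde{e}_K(ak/n)$ defining the Gauss sum, together with the check that the relabelling $k \mapsto i\bar k$ is a bijection of $\mathcal{O}_K$ fixing $N_K$.
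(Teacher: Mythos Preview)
Your argument is correct. The paper itself does not supply a proof of this lemma: it simply cites \cite[Lemma 2.11]{G&Zhao4} and records from the same source that $\widetilde{W}_K(t)\in\mr$ and is Schwartz. Your write-up therefore fills in what the paper outsources, and every step checks out: the residue-class decomposition, Poisson on the unimodular lattice $\mz^2$, the change of variables $u=a+nt$ with Jacobian $N_K(n)$, the identification $\text{Tr}_K(z/2i)=\mathrm{Im}(z)$ giving $e(\mathrm{Re}(a\bar k/n))=\widetilde e_K(a(i\bar k)/n)$, the radial symmetry reducing the integral to $\widetilde W_K(\sqrt{N_K(k)X/N_K(n)})$, and the final relabelling by the norm-preserving involution $k\mapsto i\bar k$ of $\mz[i]$.

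Since the paper offers no proof to compare against, the only remark worth making is that your approach is exactly the standard one used for such formulas (and almost certainly the one in the cited reference): split into residues, Poisson on the lattice, and match the Euclidean pairing with the trace pairing via the different $\delta_K=2i$. There is nothing to criticise.
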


    The above lemma is established in \cite[Lemma 2.11]{G&Zhao4} and we note that it is also shown in \cite{G&Zhao4} that $\widetilde{W}_K(t)\in
    \mr$ for any $t \geq 0$ and that $\widetilde{W}_K$ is a Schwartz class function as well.

\subsection{The approximate functional equation}
Suppose that $E=K$ or $F$ and $q \in \mathcal{O}_E$. Let $\chi$ be a Hecke character of trivial infinite type of $E$ modulo $q$.
The Hecke $L$-function associated with $\chi$ is defined $\Re(s) > 1$ by
\begin{equation*}
  L(s, \chi_q) = \sum_{0 \neq \mathcal{A} \subset
  \mathcal{O}_E}\chi(\mathcal{A})(N_E(\mathcal{A}))^{-s}, 
\end{equation*}
  where $\mathcal{A}$ runs over all non-zero integral ideals in $E$ and $N(\mathcal{A})$ is the
norm of $\mathcal{A}$.  E. Hecke showed that $L(s, \chi)$ admits
analytic continuation to an entire function and satisfies a
functional equation \cite[Corollary 8.6]{Newkirch}:
\begin{equation*}
  \Lambda(s, \chi_q) = g_{E}(\chi)(N_E(q))^{-1/2}\Lambda(1-s, \overline{\chi}_q),
\end{equation*}
   where (note that $E$ is totally imaginary)
\begin{equation*}
  \Lambda(s, \chi_q) = (|D_E|N_E(q))^{s/2}\Gamma_E(s)L(s, \chi_q) \quad \mbox{and} \quad \Gamma_E(s)=((2\pi)^{-s}\Gamma(s))^{d_E/2}.
\end{equation*}
   We refer the reader to \cite{Newkirch} for a more detailed discussion of the Hecke characters and the associated $L$-functions.  \newline

    Let $G(s)$ be any even function which is holomorphic and bounded in the strip $-4<\Re(s)<4$ satisfying $G(0)=1$.  Using an approximate functional equation (see Theorem 5.3 of \cite{HIEK}), we have (see \cite[Section 2.3]{G&Zhao3}) the following expression for $L(1/2+it, \chi_q)$ if $\chi_q$ is primitive:
\begin{equation} \label{approxfuneq}
\begin{split}
 L \left( \frac{1}{2}+it, \chi_q \right) = \sum_{0 \neq \mathcal{A} \subset
  \mathcal{O}_E} & \frac{\chi_q(\mathcal{A})}{N_E(\mathcal{A})^{1/2+it}}V_{t, E} \left(\frac{N_E(\mathcal{A})}{A} \right) \\
  & + \frac{g_{E}(\chi_q)}{N_E(q)^{1/2}}\left(\frac {1}{|D_E|N_E(q)} \right )^{it} \frac {\Gamma_E (1/2-it)}{\Gamma_E (1/2+it)}\sum_{0 \neq \mathcal{A} \subset
  \mathcal{O}_E}\frac{\overline{\chi}_q(\mathcal{A})}{N_E(\mathcal{A})^{1/2-it}}V_{-t, E}\left(\frac{
  N_E(\mathcal{A})}{B} \right),
     \end{split}
\end{equation}
    where $A, B$ are positive real numbers satisfying $AB=N_E(q)$,
\begin{align*}
  V_{t,E} \left(\xi \right)=\frac {1}{2\pi
   i}\int\limits\limits_{(2)} \gamma_{t,E}(s) G(s)\frac
   {\xi^{-s}}{s} \ \dif s \quad \mbox{with} \quad\gamma_{t,E}(s)=\frac {\Gamma_E(s+1/2+it)}{\Gamma_E(1/2+it)}(\sqrt{|D_E|})^{s}.
\end{align*}

    We write $V_E$ for $V_{0,E}$ and note that for a suitable $G(s)$ (for example $G(s)=e^{s^2}$) and $c>0$ (see \cite[Proposition 5.4]{HIEK}):
\begin{align}
\label{2.15}
  V_{t,E} \left(\xi \right) \ll \left( 1+\frac{\xi}{(1+|t|)^{d_E/2}} \right)^{-c}.
\end{align}

   On the other hand, if $G(s)=1$, the $j$-th derivative of $V_E(\xi)$ satisfies (see \cite[Lemma 2.1]{sound1}) the bounds
\begin{equation} \label{2.07}
      V_E\left (\xi \right) = 1+O(\xi^{1/2-\epsilon}) \; \mbox{for} \; 0<\xi<1   \quad \mbox{and} \quad V^{(j)}_E\left (\xi \right) \ll \exp\left( -\frac {d_E}{2}\xi^{2/d_E} \right) \; \mbox{for} \; \xi >0, \ j \geq 0.
\end{equation}

\subsection{Analytic behavior of Dirichlet series associated with Gauss sums}
\label{section: smooth Gauss}
  For any Hecke character $\lambda \pmod {18}$ of trivial infinite type of $F$, we define
\begin{align}
\label{h}
   h(r,s;\lambda)=\sum_{\substack{n \text{ primary} \\ (n,r)=1}}\frac {\lambda(n)g_{3,F}(r,n)}{N_F(n)^s}.
\end{align}

   We omit the proof of the next lemma as it is similar to that of Lemma 3.6 in \cite{B&Y}. Note that in the next lemma we use the convention that all sums over elements of $\mathcal{O}_F$ are restricted to primary elements.
\begin{lemma}
\label{lemma:laundrylist}
 Suppose $f$, $\alpha$ are square-free and $(r,f) = 1, (rf\alpha, 3)=1$, and set
\begin{equation*}
 h(r,f,s;\lambda) = \sum_{(n,rf) = 1} \frac{\lambda(n) g_{3,F}(r,n)}{N_F(n)^s}, \quad h_{\alpha}(r,s;\lambda) = \sum_{(n,\alpha) =1} \frac{\lambda(n) g_{3,F}(r,n)}{N_F(n)^s}.
\end{equation*}
Furthermore suppose that $r = r_1 r_2^2 r_3^3$ where $r_1 r_2$ is square-free, and let $r_3^*$ be the product of primes dividing $r_3$.
Then
\begin{align*}
 &h(r,f,s;\lambda) = \sum_{a | f} \frac{\mu_{F}(a) \lambda(a) g_{3,F}(r,a)}{N_F(a)^{s}} h(ar,s;\lambda), \quad  h(r_1 r_2^2 r_3^3, s;\lambda) = h(r_1r_2^2, r_3^*,s;\lambda), \\
&h(r_1 r_2^2, s;\lambda) = \prod_{\substack{p| r_2 \\ p \text{ prime in } \mathcal{O}_F}} (1 - \lambda(p)^3 N_F(p)^{2-3s})^{-1}
h_{r_1}(r_1 r_2^2, s;\lambda), \\
& h_{r_1}(r_1 r_2^2, s;\lambda) = \prod_{\substack{p| r_1 \\ p \text{ prime in } \mathcal{O}_F}}(1 -\lambda(p)^3 N_F(p)^{2-3s})^{-1} \sum_{a | r_1} \mu_{F}(a) N_F(a)^{1-2s} \lambda(a)^2 \overline{g_{3,F}}(r_1 r_2^2/a, a) h_1(r_1r_2^2/a, s;\lambda).
\end{align*}
\end{lemma}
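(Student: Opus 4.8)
The plan is to prove the four displayed identities one at a time. In each case I would manipulate the coprimality condition on the summation variable $n$ by Möbius inversion and then invoke the multiplicative and prime-power properties of the cubic Gauss sum recorded in Lemma~\ref{quarticGausssum}, namely \eqref{eq:gmult}, \eqref{2.03}, \eqref{2.04} and \eqref{eq:gtwisted}, together with the vanishing \eqref{2.1} of $g_{3,F}(n)$ on non-square-free $n$.

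For the first identity I would insert the relation $\sum_{a\mid (n,f)}\mu_F(a)$, which equals $1$ if $(n,f)=1$ and $0$ otherwise, and interchange the order of summation, obtaining $h(r,f,s;\lambda)=\sum_{a\mid f}\mu_F(a)\sum_{(n,r)=1,\,a\mid n}\lambda(n)g_{3,F}(r,n)N_F(n)^{-s}$. The key point is that, since $(n,r)=1$, \eqref{eq:gmult} gives $g_{3,F}(r,n)=\overline{\leg{r}{n}}_3\,g_{3,F}(n)$, which by \eqref{2.1} vanishes unless $n$ is square-free; hence the constraint $a\mid n$ with $a\mid f$ square-free forces a coprime factorization $n=am$, $(a,m)=1$ (using that $n/a$ is again primary, as noted at the end of Section~\ref{sec2.0}). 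Applying \eqref{eq:gtwisted} in the form $g_{3,F}(r,am)=g_{3,F}(ar,m)\,g_{3,F}(r,a)$ together with $\lambda(am)=\lambda(a)\lambda(m)$ then factors the inner sum as $N_F(a)^{-s}\lambda(a)g_{3,F}(r,a)\,h(ar,s;\lambda)$, as claimed.

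The second identity is essentially formal: $(n,r_1r_2^2r_3^3)=1$ and $(n,r_1r_2^2r_3^*)=1$ define the same set of $n$ since $r_3$ and $r_3^*$ have the same prime divisors, and for such $n$ one has $\leg{r_1r_2^2r_3^3}{n}_3=\leg{r_1r_2^2}{n}_3$ because $\leg{r_3}{n}_3^{3}=1$; thus $g_{3,F}(r_1r_2^2r_3^3,n)=g_{3,F}(r_1r_2^2,n)$ throughout the range and the two series agree term by term. For the third identity I would factor out of $h_{r_1}(r_1r_2^2,s;\lambda)$ the contribution of the primes $p\mid r_2$, which are excluded from the range of $h(r_1r_2^2,s;\lambda)$. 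Since $v_p(r_1r_2^2)=2$ there, \eqref{2.04} makes the local $p$-part $g_{3,F}(p^2,p^k)$ vanish except at $k=0$ and $k=3$, where it equals $1$ and $N_F(p)^2 g_{3,3,F}(p)=-N_F(p)^2$ respectively (the value $g_{3,3,F}(p)=-1$ being the elementary Ramanujan sum attached to the principal character $\leg{\cdot}{p}_3^{3}$). As these exponents are multiples of $3$, the reciprocity twists in \eqref{eq:gtwisted} are trivial, so the $p$-contribution factors off cleanly as $1-\lambda(p)^3N_F(p)^{2-3s}$; collecting these over $p\mid r_2$ gives $h_{r_1}(r_1r_2^2,s;\lambda)=\prod_{p\mid r_2}(1-\lambda(p)^3N_F(p)^{2-3s})\,h(r_1r_2^2,s;\lambda)$, which is the stated relation after dividing by the product.

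The fourth identity is the hardest and is where I expect the real work to lie. Here one removes the condition $(n,r_1)=1$ entirely, passing to the complete sums $h_1$; but now $v_p(r_1r_2^2)=1$ at each $p\mid r_1$, the generic ``bad'' valuation, for which \eqref{2.04} makes $g_{3,F}(p,p^k)$ nonzero at $k=0$ and at the boundary $k=2$, where it equals $N_F(p)g_{3,2,F}(p)$. I would run inclusion--exclusion over $a\mid r_1$ to reinsert these primes, applying \eqref{eq:gtwisted} to split off the $p$-part of $n$ for each $p\mid a$; the boundary terms should produce exactly the weights $N_F(a)^{1-2s}\lambda(a)^2\overline{g_{3,F}}(r_1r_2^2/a,a)$ (using $g_{3,2,F}(p)=\overline{g_{3,F}(p)}$, which follows from $\leg{-1}{p}_3=1$), while the Euler factor $\prod_{p\mid r_1}(1-\lambda(p)^3N_F(p)^{2-3s})^{-1}$ is forced precisely so that the resulting per-prime polynomial collapses to the trivial local factor $1$ of $h_{r_1}$. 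The main obstacle is bookkeeping the cubic-reciprocity twists $\leg{\cdot}{\cdot}_3$ that appear when \eqref{eq:gtwisted} separates a bad prime $p$ from the coprime part of $n$: splitting off $p^2$ twists the character on the remaining variable by $\leg{\cdot}{p}_3$, and one must show, via cubic reciprocity and the explicit Gauss-sum weights in the $a$-sum, that all such phases cancel so that each local factor reduces to $1-\lambda(p)^3N_F(p)^{2-3s}$. Verifying this cancellation prime by prime, and confirming that the cross terms match the $\overline{g_{3,F}}(r_1r_2^2/a,a)$ weights, is the crux of the argument.
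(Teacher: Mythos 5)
Your handling of the first three identities is correct and is exactly the route the paper intends: the paper gives no proof at all, deferring to the proof of \cite[Lemma 3.6]{B&Y}, which runs just as you describe (M\"obius over divisors of $(n,f)$ plus \eqref{eq:gtwisted} for the first; triviality of $\leg{r_3^3}{n}_3$ for the second; and for the third the local computation at $p\mid r_2$ via \eqref{2.04}, where only $j=0,3$ survive, the twist $\leg{\cdot}{p^3}_3$ is trivial, and $g_{3,3,F}(p)=-1$).

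The gap is the fourth identity, which is the only substantive assertion of the lemma, and your proposal does not prove it: you outline a strategy and then declare that verifying the cancellation of the reciprocity twists ``is the crux of the argument,'' i.e.\ you leave the crux undone. Concretely, two things are missing. First, your sketch never explains why the sums $h_1(r_1r_2^2/a,s;\lambda)$ --- with first argument $r_1r_2^2/a$ rather than $r_1r_2^2$ --- appear at all. The mechanism is that when the $p$-part $p^2$ (for $p\mid a$) is split off from $n$ by \eqref{eq:gtwisted}, the resulting $m$-dependent factor is $\overline{\leg{m}{a^2}}_3=\leg{m}{a}_3=\overline{\leg{a^2}{m}}_3$ (using cubic reciprocity), which \eqref{eq:gmult} absorbs into the Gauss sum as $g_{3,F}(a^2r_1r_2^2,m)=g_{3,F}(r_1r_2^2/a,m)$ since $a^2r_1r_2^2=a^3\cdot(r_1r_2^2/a)$ and $\leg{a^3}{m}_3=1$; without identifying this step one cannot even see the shape of the right-hand side. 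Second, after this extraction the remaining $n$-sum is still constrained at the primes $p\mid r_1/a$ (where \eqref{2.04} forces $v_p(n)\in\{0,2\}$), so it is not yet $h_1$; passing to the genuinely unrestricted $h_1(r_1r_2^2/a,s;\lambda)$ and checking that the resulting triangular system in the divisors of $r_1$ inverts to produce precisely the M\"obius weights $\mu_F(a)N_F(a)^{1-2s}\lambda(a)^2\overline{g_{3,F}}(r_1r_2^2/a,a)$ against the Euler factors $\prod_{p\mid r_1}(1-\lambda(p)^3N_F(p)^{2-3s})^{-1}$ is a prime-by-prime computation that your proposal asserts ``should'' work but does not perform. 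As written, then, you have established three of the four displayed formulas and only a plausible plan for the one that carries the content of the lemma.
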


    Similar to the proof of \cite[Lemma 3.5]{B&Y}, we deduce readily from Lemma \ref{lemma:laundrylist} and a result of S. J. Patterson  \cite[Lemma, p. 200]{P} the following result concerning the analytic behavior of $h(r,s;\lambda)$ on $\Re(s) >1$.
\begin{lemma}{\cite[Lemma 2.5]{G&Zhao1}}
\label{lem1} The function $h(r,s;\lambda)$ has meromorphic continuation to the entire complex plane. It is holomorphic in the
region $\sigma=\Re(s) > 1$ except possibly for a pole at $s = 4/3$. For any $\varepsilon>0$, letting $\sigma_1 = 3/2+\varepsilon$, then for $\sigma_1 \geq \sigma \geq \sigma_1-1/2$, $|s-4/3|>1/6$, we have
\[ h(r,s;\lambda) \ll N_F(r)^{(\sigma_1-\sigma+\varepsilon)/2}(1+t^2)^{ \sigma_1-\sigma+\varepsilon}, \]
  where $t=\Im(s)$. Moreover, suppose that $r = r_1 r_2^2 r_3^3$ where $r_1 r_2$ is square-free, then the residue satisfies
\[ \mathrm{Res}_{s=4/3}h(r,s;\lambda) \ll N_F(r)^{\varepsilon}N_F(r_1)^{-1/6+\varepsilon}. \]
\end{lemma}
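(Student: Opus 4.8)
The plan is to reduce $h(r,s;\lambda)$ to the ``complete'' Dirichlet series $h_1(m,s;\lambda)=\sum_{n\text{ primary}}\lambda(n)g_{3,F}(m,n)N_F(n)^{-s}$ and to borrow the analytic input for the latter from the theory of cubic metaplectic Eisenstein series. The facts I would take from Patterson \cite[Lemma, p.~200]{P} are that $h_1(m,s;\lambda)$ continues meromorphically to $\mc$, is holomorphic for $\Re(s)>1$ apart from a single pole at $s=4/3$, obeys a convexity bound in vertical strips, and has a residue at $s=4/3$ satisfying $\mathrm{Res}_{s=4/3}h_1(m,s;\lambda)\ll N_F(m_1)^{-1/6+\varepsilon}$, where $m_1$ denotes the square-free part of $m$. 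For orientation, note that \eqref{2.1} gives $|g_{3,F}(r,n)|\le N_F(n)^{1/2}$, so $h(r,s;\lambda)$ converges absolutely for $\sigma>3/2$ and is $O(N_F(r)^{\varepsilon})$ on the line $\sigma=\sigma_1$, matching the asserted bound at its right endpoint.

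Next I would feed $r=r_1r_2^2r_3^3$ (with $r_1r_2$ square-free) into the four identities of Lemma~\ref{lemma:laundrylist} and chain them, so that the coprimality restriction $(n,r)=1$ is stripped away at the cost of finitely many elementary factors: Dirichlet polynomials of the shape $\sum_{a\mid f}\mu_F(a)\lambda(a)g_{3,F}(r,a)N_F(a)^{-s}$ and $\sum_{a\mid r_1}\mu_F(a)N_F(a)^{1-2s}\lambda(a)^2\overline{g_{3,F}}(r_1r_2^2/a,a)$, finite Euler products $\prod_{p\mid r_2}(1-\lambda(p)^3N_F(p)^{2-3s})^{-1}$ and $\prod_{p\mid r_1}(1-\lambda(p)^3N_F(p)^{2-3s})^{-1}$, and base series $h_1(r_1r_2^2/a,s;\lambda)$. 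Each Euler factor is singular only where $N_F(p)^{2-3s}\lambda(p)^3=1$, that is on $\Re(s)=2/3$, which lies strictly to the left of the strip $\sigma\ge\sigma_1-1/2=1+\varepsilon$; there these products are holomorphic and $O(N_F(r)^{\varepsilon})$. Hence $h(r,s;\lambda)$ inherits both its meromorphic continuation and its unique pole at $s=4/3$ in $\Re(s)>1$ from the base series, which settles the first assertion.

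For the growth bound I would interpolate: absolute convergence pins down $h(r,s;\lambda)\ll N_F(r)^{\varepsilon}(1+t^2)^{\varepsilon}$ on $\sigma=\sigma_1$, Patterson's estimate supplies the matching bound on the left edge $\sigma=\sigma_1-1/2$, and Phragm\'en--Lindel\"of across the strip yields $N_F(r)^{(\sigma_1-\sigma+\varepsilon)/2}(1+t^2)^{\sigma_1-\sigma+\varepsilon}$ once the $N_F(\cdot)$-weights carried by the Gauss-sum polynomials are absorbed via $|g_{3,F}(r,a)|\le N_F(a)^{1/2}$. For the residue I would apply $\mathrm{Res}_{s=4/3}$ to the factorization: only the base factors are singular, the rest being evaluated at $s=4/3$. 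The decisive piece is $\sum_{a\mid r_1}\mu_F(a)N_F(a)^{1-2s}\lambda(a)^2\overline{g_{3,F}}(r_1r_2^2/a,a)\,\mathrm{Res}_{s=4/3}h_1(r_1r_2^2/a,s;\lambda)$. Since the square-free part of $r_1r_2^2/a$ is $r_1/a$ for $a\mid r_1$, inserting $N_F(a)^{1-2s}\vert_{s=4/3}=N_F(a)^{-5/3}$, the bound $\overline{g_{3,F}}(r_1r_2^2/a,a)\ll N_F(a)^{1/2}$, and $\mathrm{Res}_{s=4/3}h_1(r_1r_2^2/a,s;\lambda)\ll N_F(r_1/a)^{-1/6+\varepsilon}$ renders each summand $\ll N_F(a)^{-7/6}N_F(r_1/a)^{-1/6+\varepsilon}=N_F(r_1)^{-1/6+\varepsilon}N_F(a)^{-1+\varepsilon}$; summing the resulting divisor sum over $a\mid r_1$ yields $N_F(r_1)^{-1/6+\varepsilon}$, while the remaining finite factors attached to $r_2$ and $r_3$ contribute only $N_F(r)^{\varepsilon}$, giving the stated $N_F(r)^{\varepsilon}N_F(r_1)^{-1/6+\varepsilon}$.

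The crux of the argument is not the reduction, which is purely combinatorial, but securing Patterson's analytic input for $h_1$ in sufficiently sharp form---above all the residue bound $N_F(m_1)^{-1/6+\varepsilon}$ expressed through the \emph{square-free} part $m_1$, since it is exactly this exponent, combined with the arithmetic of $N_F(a)^{1-2s}$ at $s=4/3$ and the Gauss-sum size from \eqref{2.1}, that produces the $-1/6$ in the final residue estimate. Verifying that the auxiliary Euler products stay holomorphic and bounded in $\sigma\ge 1+\varepsilon$, and that the convexity interpolation carries the stated $N_F(r)$- and $t$-dependence, is then routine bookkeeping.
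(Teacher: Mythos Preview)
Your proposal is correct and follows essentially the same route the paper indicates: reduce $h(r,s;\lambda)$ to the unrestricted series $h_1$ via the identities of Lemma~\ref{lemma:laundrylist}, import the analytic continuation, pole location, convexity bound, and residue estimate for $h_1$ from Patterson's lemma, and then track the elementary factors. This is exactly the ``similar to \cite[Lemma 3.5]{B&Y}'' argument the paper invokes. One harmless slip: in your residue computation the exponent on $N_F(a)$ should come out as $-1-\varepsilon$ rather than $-1+\varepsilon$, but the divisor sum over $a\mid r_1$ is $\ll N_F(r_1)^{\varepsilon}$ either way.
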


\subsection{The large sieve with cubic residue symbols}  An important tool in the proof of our results is the following large sieve inequality for cubic residue symbols in $F$, which is a special case of \cite[Theorem 1.3]{BGL}:
\begin{lemma} \label{cubicls}
Let $M,N$ be positive integers, and let $(a_{n})$ be an arbitrary sequence of complex numbers indexed by primary elements $n$ in $\mathcal{O}_F$. Then we have for any $\varepsilon > 0$,
\begin{equation*}
 \sumstar_{\substack{m \text{ primary} \\ \\N_F(m) \leq M}} \left| \ \sumstar_{\substack{n \text{ primary} \\N_F(n) \leq N}} a_{n} \leg {m}{n}_3 \right|^2
 \ll_{\varepsilon} \left( M + N + (MN)^{2/3} \right)(MN)^{\varepsilon} \sum_{\substack{n \text{ primary} \\N_F(n) \leq N}} |a_{n}|^2,
\end{equation*}
where the asterisks indicate that $m$ and $n$ run over square-free elements of $\mathcal{O}_F$ and $(\frac
{\cdot}{n})_3$ is the cubic residue symbol.
\end{lemma}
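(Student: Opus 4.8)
The plan is to obtain the inequality as a direct specialization of the general power-residue large sieve \cite[Theorem 1.3]{BGL}, so the first task is to match the hypotheses. The field $F=\mq(\zeta_{12})$ has class number one and contains the cube roots of unity (indeed $\omega=\zeta^4_{12}\in F$), and the symbol $\leg{\cdot}{\cdot}_3$ fixed in Section \ref{sec2.4} is precisely the $3$-rd power residue symbol to which that theorem applies. First I would verify that the primary normalization chosen in Section \ref{sec2.0}, together with the two square-free restrictions carried by the asterisks and the coprimality to the ramified primes $1+i,\sqrt{3}$ (subsumed in $(n,6)=1$), places the moduli in the admissible range of \cite[Theorem 1.3]{BGL}. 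With the order set equal to $3$, the general bound there of the shape $M+N+(MN)^{\theta}$ has $\theta=2/3$, which yields the claimed estimate; this deduction is essentially bookkeeping.

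For transparency I would also indicate why the exponent $2/3$ is the correct one, since this is where the cubic structure enters. Writing
\[
S=\sumstar_{\substack{m \text{ primary}\\ N_F(m)\le M}}\Big|\sumstar_{\substack{n \text{ primary}\\ N_F(n)\le N}} a_n\leg{m}{n}_3\Big|^2
\]
and expanding the square, one is led to the inner sums $\sum_m \leg{m}{n_1}_3\,\overline{\leg{m}{n_2}_3}=\sum_m\leg{m}{n_1 n_2^2}_3$, which are cubic character sums modulo $n_1 n_2^2$. The diagonal $n_1=n_2$ makes the symbol principal and contributes $\ll M\sum_n|a_n|^2$, while the companion term $N$ emerges from the symmetric, self-dual treatment in which cubic reciprocity exchanges the roles of $m$ and $n$. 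For $n_1\ne n_2$ the symbol $\leg{\cdot}{n_1 n_2^2}_3$ is non-principal, and the cancellation in the $m$-sum is extracted by Poisson summation in $\mathcal{O}_F$ (in the spirit of Lemma \ref{Poissonsum}, but over $F$), converting each inner sum into a dual sum of cubic Gauss sums $g_{3,F}$. Inserting the evaluation $|g_{3,F}(n)|=\sqrt{N_F(n)}$ from \eqref{2.1} and controlling the resulting Dirichlet series in Gauss sums by the analytic-continuation and residue bounds of Lemma \ref{lem1}, one bounds the off-diagonal contribution, and balancing the dual length against $M$ and $N$ produces the bilinear term $(MN)^{2/3}$.

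The main obstacle is precisely this last step: obtaining the \emph{exact} exponent $2/3$ rather than a weaker bilinear saving. This requires a careful treatment of the non-principal Gauss-sum series $h(r,s;\lambda)$ of \eqref{h}, in particular the pole at $s=4/3$ arising from the cubic metaplectic theory of Patterson \cite{P} and the sharp residue estimate of Lemma \ref{lem1}; it is the genuine cubic analogue of the off-diagonal analysis in Heath-Brown's quadratic large sieve and constitutes the real content of \cite[Theorem 1.3]{BGL}. By contrast, once that general theorem is granted, the specialization to the present cubic setting over $F$ is routine, so in practice I would carry out only the hypothesis-matching of the first paragraph and invoke \cite{BGL} for the analytic heart.
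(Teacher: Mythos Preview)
Your proposal is correct and matches the paper's approach exactly: the paper does not prove this lemma at all but simply states it as ``a special case of \cite[Theorem 1.3]{BGL}'', so your plan to invoke that theorem after matching hypotheses is precisely what is intended. The additional heuristic sketch you give of the off-diagonal analysis is more than the paper provides, but it is accurate background rather than a different argument.
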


\section{The cubic large sieve in $\mq(i)$}
\label{sec8}

In this section we shall establish Theorem \ref{cubiclargesieve}.  Using Lemma \ref{lemma:cubicclass}, we see that
\begin{equation} \label{trans}
\begin{split}
\sum\limits_{\substack{ (q), \ q \in \mathcal{O}_K \\ Q<N_K(q)\le 2Q\\ (q,6)=1}}\
\sumstar\limits_{\substack{\chi \bmod q\\ \chi^3=\chi_0}} \left| \ \sumstar\limits_{\substack{M<N_K(m)\le 2M}} a_{m} \chi(m)\right|^2 &=
\sumprime\limits_{\substack{(n) , \ n\in \mathcal{O}_F \\ Q<N_F(n)\le 2Q\\
(n,6)=1}} \left| \
\sumstar\limits_{\substack{M< N_K(m) \le 2M}} a_{m} \chi_n(m)\right|^2,
\end{split}
\end{equation}
where the prime indicates that $n$ is square-free and has no
$K$-rational prime divisor.

\subsection{Definition of certain norms}
To bound the expression on the right-hand side of \eqref{trans}, we begin by defining a norm corresponding
to that double sum.  Set
\begin{equation*}
B_1(Q,M)=\sup\limits_{(a_{m})} \| a_{m} \|^{-2}
\sumprime\limits_{\substack{(n) , \ n\in \mathcal{O}_F \\ Q<N_F(n)\le 2Q\\
(n,6)=1}} \left| \
\sumstar\limits_{\substack{M< N_K(m) \le 2M}} a_{m} \chi_n(m)\right|^2, \quad \mbox{where} \quad 
\|a_{m} \|^2 = \sum_{m} |a_{m}|^2. 
\end{equation*}
Without any loss of generality, we may assume that $(a_{m})$ is not the zero sequence. With applications to Theorem \ref{secmom} in mind, we further define a norm $B_2(Q,M)$ in the same way as $B_1(Q,M)$ except removing the condition that $n$ has no $K$-rational prime divisor. \newline

   By the duality principle, we have
\begin{equation}
\label{B1C1}
B_1(Q,M)=C_1(M,Q),
\end{equation}
   where we define the norm $C_1(M,Q)$ dual to $B_1(Q,M)$ by
\begin{equation*}
C_1(M,Q)=\sup\limits_{(b_{(n)})} \| b_{(n)} \|^{-2}
\sumstar\limits_{\substack{M< N_K(m) \le 2M}}  \left| \
\sumprime\limits_{\substack{(n) , \ n\in \mathcal{O}_F \\ Q<N_F(n)\le 2Q\\
(n,6)=1 }} b_{(n)} \chi_n(m)\right|^2,
\end{equation*}
   where $(b_{(n)})$ are non-zero sequences of complex numbers indexed by the ideals $(n)$ in $\mathcal{O}_F$. \newline

Furthermore, we define a norm $C_2(M,Q)$ by extending the summation in the definition of $C_1(M,Q)$ to all elements $m \in \mathcal{O}_K$ with $M < N_K(m) \leq 2M$. Trivially, we have
\begin{equation} \label{C12}
C_1(M,Q)\le C_2(M,Q).
\end{equation}

\subsection{Comparison of the norms} \label{section:comparison}

We need the following lemma on the norms defined in the previous section.

\begin{lemma} \label{normlemma} Let $Q,M\ge 1$ and $v$ a fixed positive integer . Then we have for any $\varepsilon>0$,
\begin{align}
\label{C2e1}
C_2(M,Q) & \ll (QM)^{\varepsilon}\left(M +Q^{5/3}\right), \\
\label{C22}
C_2(M,Q) & \ll M^{\varepsilon}Q^{1-1/v}\sum\limits_{j=0}^{v-1}C_2(2^jM^v,Q)^{1/v}, \\
\label{B11}
B_1(Q_1,M) & \ll B_1(Q_2,M), \quad \text{ if $Q_1,M\ge 1$ and $Q_2\ge CQ_1\log (2Q_1M)$}, \\
\label{B21}
B_2(Q,M) & \ll (\log{2Q})^3 Q^{1/2}X^{-1/2} B_1(XQ^{\varepsilon},M), \quad \text{for some $X$ with $1\le X\le Q$}.
\end{align}
\end{lemma}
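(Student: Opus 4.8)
The plan is to establish the four estimates in turn, working throughout with the dual norms and the relations \eqref{B1C1} and \eqref{C12}. Of the four, \eqref{C2e1} carries all the analytic weight while the other three are soft structural reductions; it is worth noting at the outset that combining \eqref{C2e1} with \eqref{C22} for $v=2,3$ is exactly what produces the three-term minimum in Theorem~\ref{cubiclargesieve}, since plugging \eqref{C2e1} into \eqref{C22} yields $C_2(M,Q)\ll (QM)^\varepsilon(Q^{1-1/v}M+Q^{1+2/(3v)})$, which gives $Q^{4/3}+Q^{1/2}M$ at $v=2$ and $Q^{11/9}+Q^{2/3}M$ at $v=3$. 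So the exponents in \eqref{C2e1} and \eqref{C22} should be arranged with this endpoint in mind.

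For \eqref{C2e1} I would open the square in $C_2(M,Q)$, writing the inner expression as $\sum_{n_1,n_2}b_{(n_1)}\overline{b_{(n_2)}}\sum_{m}(\chi_{n_1}\overline{\chi_{n_2}})(m)\,W(N_K(m)/M)$ for a fixed smooth majorant $W$ of the indicator of $(M,2M]$, and then apply the Poisson summation formula of Lemma~\ref{Poissonsum} to the sum over $m\in\mathcal{O}_K$. The character $\chi_{n_1}\overline{\chi_{n_2}}$ has modulus of $K$-norm $\asymp Q^2$, and the diagonal $n_1=n_2$, where this character is principal, contributes $\ll M\|b\|^2$ (essentially from the frequency $k=0$), giving the $M$ in \eqref{C2e1}. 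The off-diagonal is the crux: the dual variable $k$ is localized to $N_K(k)\ll Q^2/M$ by the rapid decay of $\widetilde{W}_K$, and the Gauss sum $g_K(k,\chi_{n_1}\overline{\chi_{n_2}})$ must be unfolded into cubic Gauss sums $g_{3,F}$ by means of \eqref{wchi}, the relation \eqref{grg1}, and the twisted multiplicativity \eqref{2.03} of Lemma~\ref{quarticGausssum}. The resulting sum over $k,n_1,n_2$ is then to be controlled by the analytic information on the cubic Gauss-sum Dirichlet series $h(r,s;\lambda)$ in Lemma~\ref{lem1}, with the pole at $s=4/3$ dictating the size of the relevant character sums, supplemented where necessary by the cubic large sieve of Lemma~\ref{cubicls}. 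Balancing the Poisson length $Q^2/M$ against the square-root size of each Gauss sum and Patterson's cancellation is what produces the exponent $5/3$; this off-diagonal estimate is where essentially all of the difficulty lies, and I expect it to be the main obstacle.

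The estimate \eqref{C22} I would obtain by a tensor-power (H\"older) argument after passing to the adjoint form $C_2(M,Q)=\sup_a\|a\|^{-2}\sum_{n}|R(n)|^2$, where $R(n)=\sum_m a_m\overline{\chi_n(m)}$, the outer sum runs over $\asymp Q$ values of $n$, and $m\mapsto\overline{\chi_n(m)}$ is multiplicative in $m$. Then $R(n)^v=\sum_{\mu}c_\mu\,\overline{\chi_n(\mu)}$, where $\mu=m_1\cdots m_v\in\mathcal{O}_K$ has $N_K(\mu)\asymp M^v$ (this is precisely where $C_2$, rather than $C_1$, is needed, since $\mu$ need not be square-free) and $c_\mu=\sum_{m_1\cdots m_v=\mu}a_{m_1}\cdots a_{m_v}$ satisfies $\|c\|^2\ll M^\varepsilon\|a\|^{2v}$ by the divisor bound. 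H\"older against the $\asymp Q$ outer terms gives $\sum_n|R(n)|^2\ll Q^{1-1/v}\big(\sum_n|R(n)^v|^2\big)^{1/v}$, and a dyadic splitting of $M^v<N_K(\mu)\le(2M)^v$ into the $v$ pieces indexed by $j$ bounds $\sum_n|R(n)^v|^2$ by $\sum_{j}C_2(2^jM^v,Q)\|c\|^2$; subadditivity of $x\mapsto x^{1/v}$ then yields the stated $\sum_{j}C_2(2^jM^v,Q)^{1/v}$.

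Finally, \eqref{B11} and \eqref{B21} are multiplicative manipulations on the modulus. For \eqref{B11} I would use $\chi_{n\mathfrak{l}}=\chi_n\chi_{\mathfrak{l}}$ to embed each $n$ with $N_F(n)\asymp Q_1$ into the range $\asymp Q_2$ by multiplying by an auxiliary primary prime $\mathfrak{l}$; averaging over the admissible set $\mathcal{L}$ of such $\mathfrak{l}$ (those keeping $n\mathfrak{l}$ square-free, coprime to $6$, and free of $K$-rational prime divisors) and replacing $a_m$ by $a_m\overline{\chi_{\mathfrak{l}}(m)}$ relates $B_1(Q_1,M)$ to $B_1(Q_2,M)$, the hypothesis $Q_2\ge CQ_1\log(2Q_1M)$ guaranteeing both that $|\mathcal{L}|$ is large and that the error from $m$ with $\mathfrak{l}\mid m$ is negligible. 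For \eqref{B21} I would exploit that, since every prime of $\mathcal{O}_K$ coprime to $6$ splits in $F$, a $K$-rational prime divisor of $n$ is a product $\varpi=\mathfrak{p}\mathfrak{p}^\sigma$ with $\chi_\varpi$ principal; factoring $n=n_0 c$ with $c$ the $K$-rational part of $K$-norm $\asymp Y^{1/2}$ (so $N_F(c)\asymp Y$) reduces the $\chi_n$-sum to a $\chi_{n_0}$-sum over the primitive part of norm $\asymp Q/Y$, controlled by $B_1(Q/Y,M)$, with $m$ restricted to be coprime to $c$. Summing over the $\asymp Y^{1/2}$ square-free $K$-rational parts $c$ produces the factor $Q^{1/2}X^{-1/2}$ upon setting $X=Q/Y$, the three powers of $\log 2Q$ absorbing the dyadic decomposition in $Y$ together with the divisor bounds; choosing the optimal dyadic $Y$ and, if needed, invoking \eqref{B11} to pass from $B_1(X,M)$ to $B_1(XQ^\varepsilon,M)$ then gives the claim.
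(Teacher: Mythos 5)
Your treatments of \eqref{C22}, \eqref{B11} and \eqref{B21} are essentially the intended ones: the paper omits these proofs, referring to the corresponding estimates in Baier--Young, and your H\"older/tensor-power argument (with the divisor-bound control of $\|c\|$ and the dyadic splitting into $v$ pieces), the prime-multiplication trick for shifting the modulus range, and the extraction of the $K$-rational part of $n$ with its $\asymp Y^{1/2}$ count are exactly what those arguments do. The problem is in the crux step of \eqref{C2e1}, which you yourself flag as the main obstacle. After Poisson summation the off-diagonal is a genuinely bilinear form: the coefficients $b_{(n_1)}\overline{b}_{(n_2)}$ are arbitrary, so there is no Dirichlet series in $n$ to which Lemma \ref{lem1} can be applied, and Patterson's cancellation in $\sum_n g_{3,F}(r,n)N_F(n)^{-s}$ is not available. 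Attributing the exponent $5/3$ to ``Patterson's cancellation'' is therefore not tenable; Lemma \ref{lem1} plays no role in this lemma (it is used later, in bounding $\mathcal{M}_2$ for the first moment, where the $n$-sum has explicit coefficients). What the paper actually does is bound the cubic Gauss sums trivially by $\sqrt{N_F(n)}$ using \eqref{2.1}, absorb them into the coefficients, and apply the cubic large sieve of Lemma \ref{cubicls}, in the Cauchy--Schwarz form \eqref{eq:HBcubic2}, to the surviving bilinear form $\sum_{n_1,n_2}c_{n_1}c'_{n_2}\leg{n_1}{n_2}_3$; the exponent $5/3$ then falls out of the large-sieve saving $X^{2/3}$ with $X\asymp Q/N_F(\delta\Delta)$ combined with the Poisson truncation $N_K(h)\ll Q^{2+\varepsilon}N_K(\ell)/(N_F(\delta\Delta^2)M)$ and a trivial summation over the remaining variables. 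In short, the tool you relegate to a supplement is the engine, and the tool you put in charge cannot act on arbitrary coefficients.

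A second, more technical omission in the same step: to evaluate $g_K(h,\chi_{n_1}\overline{\chi}_{n_2})$ via \eqref{grg1} and \eqref{wchi} you need $\chi_{n_1}\overline{\chi}_{n_2}$ to be primitive, which fails when $n_1$ and $n_2$ share factors. The paper first extracts $\Delta=(n_1,n_2)$ and $\delta=(n_1,n_2^{\sigma})$, changes variables, and removes the resulting coprimality condition on $m$ by M\"obius before applying Lemma \ref{Poissonsum}; without this reduction your unfolding of the Gauss sum into $g_{3,F}$ is not justified, and the subsequent factorization via \eqref{eq:gtwisted} has no clean form. Neither repair is conceptually difficult once identified, but as written the off-diagonal argument for \eqref{C2e1} does not go through.
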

\begin{proof}
  As the proofs of \eqref{C22}-\eqref{B21} are similar to those of \cite[(31), (33)]{B&Y}, we omit them here by only pointing out that the proof of \eqref{B21} makes use of \eqref{B11}. To prove \eqref{C2e1},
  we recall that $C_2(M,Q)$
is the norm of the sum
\begin{equation} \label{eq:C2defW}
\sum\limits_{\substack{M< N_K(m) \le 2M}}  \left| \ \sumprime\limits_{\substack{(n), \ n\in \mathcal{O}_F \\ Q<N_F(n)\le 2Q\\
(n,6)=1}} b_{(n)} \chi_n(m)\right|^2  \ll \sum\limits_{m\in \mathcal{O}_K }
W\left(\frac{N_K(m)}{M}\right) \left| \ \sumprime\limits_{\substack{(n) , n\in \mathcal{O}_F \\ Q<N_F(n)\le 2Q\\
(n,6)=1 }} b_{(n)}
\chi_n(m)\right|^2,
\end{equation}
where $W:\mr \rightarrow \mr$ is a fixed smooth, nonnegative, compactly-supported function such that $W(x) \geq 1$ for $1 \leq x \leq 2$. \newline

Expanding out the sum on the right-hand side of \eqref{eq:C2defW}, we get
\begin{equation*}
\sumprime \limits_{\substack{(n_1), (n_2), \ n_1,n_2\in  \mathcal{O}_F \\
Q<N_F(n_1), N_F(n_2)\le 2Q\\ (n_1n_2, 6)=1}}
b_{(n_1)}\overline{b}_{(n_2)} \sum\limits_{m \in \mathcal{O}_K }
W\left(\frac{N_K(m)}{M}\right) \chi_{n_1}\overline{\chi}_{n_2}(m).
\end{equation*}
Now we extract the greatest common divisor $\Delta$ of $n_1$ and
$n_2$, getting
\begin{equation*}
\sumprime_{\substack{\Delta \in \mathcal{O}_F \\ \\ N_F(\Delta) \leq 2Q \\ (\Delta, 6)=1 \\ \Delta \text{ primary}}}\ \sumprime \limits_{\substack{(n_1), (n_2), \ n_1,n_2\in
\mathcal{O}_F \\ \frac{Q}{N_F(\Delta)}< N_F(n_1), N_F(n_2)\le
\frac{2Q}{N_F(\Delta)} \\
(n_1,n_2)=1 \\ (n_1 n_2, 6N_{F/K}(\Delta) = 1 )}} b_{(n_1
\Delta)}\overline{b}_{(n_2 \Delta)} \sum\limits_{\substack{m\in \mathcal{O}_K
\\ (m, N_{F/K}(\Delta)) = 1}} W\left(\frac{N_K(m)}{M}\right)
\chi_{n_1}\overline{\chi}_{n_2}(m).
\end{equation*}
  We further write $\delta = (n_1, n^{\sigma}_2)$ and change variables
via $n_1 \rightarrow \delta n_1$, $n_2 \rightarrow
\delta^{\sigma} n_2$ to get
\begin{equation} \label{eq:prePoisson}
\begin{split}
\sumprime_{\substack{\Delta \in \mathcal{O}_F \\ N_F(\Delta) \leq 2Q \\ (\Delta, 6)=1 \\
\Delta \text{ primary}}}\
\sumprime_{\substack{\delta \in \mathcal{O}_F \\ N_F(\delta)
\leq \frac{2Q}{N_F(\Delta)} \\ (\delta, 6)=1, \ \delta \text{ primary} \\
(N_{F/K}(\delta), N_{F/K}(\Delta)) = 1}} & \
\sumprime\limits_{\substack{(n_1), (n_2), \ n_1,n_2\in \mathcal{O}_F\\
\frac{Q}{N_F(\delta \Delta)}<N_F(n_1),N_F(n_2)\le \frac{2Q}{N_F(\delta
\Delta)} \\ (N_{F/K}(n_1),
N_{F/K}(n_2)) = 1 \\ (N_{F/K}(n_1 n_2), 6N_{F/K}(\delta \Delta)) = 1 } } b_{(n_1 \Delta \delta)}\overline{b}_{(n_2
\Delta \delta^{\sigma})}
\\
& \times \sum\limits_{\substack{m\in \mathcal{O}_K\\ (m, N_{F/K}(\Delta)) = 1}}
W\left(\frac{N_K(m)}{M}\right)
\chi_{n_1}\overline{\chi}_{n_2 \delta}(m),
\end{split}
\end{equation}
where we use that $\chi_{\delta}\overline{\chi_{\delta^{\sigma}}}=\chi_{\delta}^2=
\overline{\chi_{\delta}}$. Next we remove the coprimality
condition in the sum over $m$ by the M\"obius function. We shall do so by fixing a generator for every prime ideal of $\mathcal{O}_K$ together with $1$ as the generator for $\mathcal{O}_K$ and extend to any composite $0 \neq n \in \mathcal{O}_K$ multiplicatively. We call these generators primary in $\mathcal{O}_K$. Thus we get
\begin{equation*}
\sum\limits_{\substack{m\in \mathcal{O}_K\\ (m, N_{F/K}(\Delta)) = 1}}
W\left(\frac{N_K(m)}{M}\right)
\chi_{n_1}\overline{\chi}_{n_2 \delta}(m) =
\sum_{\substack{l \text{ primary in } \mathcal{O}_K \\ \ell | N_{F/K}(\Delta)}} \mu_K(\ell)
\chi_{n_1}\overline{\chi}_{n_2\delta}(\ell)
\sum\limits_{\substack{m\in \mathcal{O}_K}} W\left(\frac{N_K(m)}{M/N_K(\ell)}\right)
\chi_{n_1}\overline{\chi}_{n_2\delta}(m),
\end{equation*}
which by the Poisson summation formula \eqref{PoissonsumQw} is
\begin{equation} \label{afterpoisson}
M \sum_{\substack{l \text{ primary in } \mathcal{O}_K\\ \ell | N_{F/K}(\Delta)}} \frac{\mu_K(\ell)
\chi_{n_1}\overline{\chi}_{n_2\delta}(\ell)}{N_K(\ell) N_{F}(n_1 n_2 \delta)} \sum\limits_{\substack{h\in \mathcal{O}_K}}
g_K(h,\chi_{n_1}\overline{\chi}_{n_2\delta})\widetilde{W}_K\left(\frac{MN_K(h)}{N_K(\ell) N_{F}(n_1 n_2 \delta)}\right).
\end{equation}

   As $\chi_{n_1}\overline{\chi}_{n_2\delta}$ is a primitive character modulo $N_{F/K}(n_1n_2\delta)$, we then derive from \eqref{grg1} and \eqref{wchi} that for any $h \in \mathcal{O}_K$,
\begin{align*}
  g_K(h,\chi_{n_1}\overline{\chi}_{n_2\delta}) = \overline{\chi}_{n_1}\chi_{n_2\delta}(h)g_K(\chi_{n_1}\overline{\chi}_{n_2\delta})= \overline{\chi}_{n_1}\chi_{n_2\delta}(h)\overline{\chi_{n_1}\overline{\chi}_{n_2\delta}} \left( \sqrt{3} \right)g_{3, F}(n_1n^{\sigma}_2\delta^{\sigma}) .
\end{align*}
  Using this, we can rewrite \eqref{afterpoisson} as
\[
M \sum_{\substack{l \text{ primary in } \mathcal{O}_K\\ \ell | N_{F/K}(\Delta)}} \frac{\mu_K(\ell)
\chi_{n_1}\overline{\chi}_{n_2\delta}(\ell)}{N_K(\ell) N_{F}(n_1 n_2 \delta)} \overline{\chi_{n_1}\overline{\chi}_{n_2\delta}}\left( \sqrt{3} \right) g_{3, F}(n_1n^{\sigma}_2\delta^{\sigma})\sum\limits_{\substack{h\in \mathcal{O}_K}}
\overline{\chi}_{n_1}\chi_{n_2\delta}(h)\widetilde{W}_K\left(\frac{MN_K(h)}{N_K(\ell) N_{F}(n_1 n_2 \delta)}\right).
\]

   When $h=0$, the expression in \eqref{afterpoisson} vanishes unless
$n_1 = n_2 = \delta =1$. Hence, the contribution of $h=0$ to
\eqref{eq:prePoisson} is
\begin{equation*}
\ll MQ^{\varepsilon} \sumprime_{\substack{\Delta \in \mathcal{O}_F, \ (\Delta, 6)=1 \\ Q<N_F(\Delta) \leq 2Q \\
\Delta \text{ primary} }} |b_{(\Delta)}|^2 \ll M
Q^{\varepsilon}\| b_{(n)} \|^2.
\end{equation*}

   In the sequel, we assume that $n_1, n_2$ are primary. We then note that it follows from \eqref{grg1} and \eqref{eq:gtwisted} that
\begin{align*}
  g_{3, F}(n_1n^{\sigma}_2\delta^{\sigma})=\leg {n_2\delta}{n^{\sigma}_1}_3 \leg {\delta}{n_2}_3g_{3,F}(n_1)g_{3,F}(n^{\sigma}_2)g_{3,F}(\delta^{\sigma}).
\end{align*}
   Using this and changing $n_1 \rightarrow n^{\sigma}_1$, we see that the contribution of
$h\not=0$ to the sum in \eqref{afterpoisson} takes the form
\[ S_W(M,Q) = M\sumprime_{\substack{\Delta \in \mathcal{O}_F \\ N_F(\Delta) \leq 2Q \\ (\Delta, 6)=1 \\
\Delta \text{ primary}}}\
\sumprime_{\substack{\delta \in \mathcal{O}_F,  \ N_F(\delta)
\leq \frac{2Q}{N_F(\Delta)} \\ (\delta, 6)=1, \ \delta \text{ primary} \\
(N_{F/K}(\delta), N_{F/K}(\Delta)) = 1}}\frac{g_{3,F}(\delta^{\sigma})\chi_{\delta}(\sqrt{3})}{ N_{F}(\delta)} \sum_{\substack{l \text{ primary in } \mathcal{O}_K\\ \ell | N_{F/K}(\Delta)}}\frac{\mu_K(\ell)}{N_K(\ell)} \overline{\chi}_{\delta}(\ell)  \sum\limits_{\substack{h\in \mathcal{O}_K \\ h\not=0}} \chi_{\delta}(h) U(\Delta,\delta,l,h) ,\]
where
\begin{equation*}
U(\Delta,\delta,l,h)=\sumprime\limits_{\substack{n_1,n_2\in \mathcal{O}_F , \ n_1,n_2 \text{ primary} \\
\frac{Q}{N_F(\delta \Delta)}<N_F(n_1),N_F(n_2)\le \frac{2Q}{N_F(\delta
\Delta)} \\ (N_{F/K}(n_1),
N_{F/K}(n_2)) = 1 \\ (N_{F/K}(n_1 n_2), 6N_{F/K}(\delta \Delta)) = 1 } }\widetilde{W}_K\left(\frac{M N_K(h)}{N_K(\ell) N_{F}(n_1 n_2 \delta)}\right) c_{\Delta,\delta,\ell,h,n_1}
c_{\Delta,\delta,\ell,h,n_2}'
\left(\frac{n_1}{n_2}\right)_3,
\end{equation*}

\[
c_{\Delta,\delta,\ell,h,n}:=\chi_{n^{\sigma}}(\ell )\chi_{n}\left( \sqrt{3}\delta h \right)
\frac{g_{3,F}(n^{\sigma})}{N_{F}(n)}
b_{n^{\sigma}\Delta\delta} \; \; \; \mbox{and} \; \; \;
c_{\Delta,\delta,\ell,h,n}':= \chi_{n^{\sigma}}(\ell )\chi_{n}\left( \sqrt{3}\delta h \right)
\frac{g_{3,F}(n^{\sigma})}{N_{F}(n)}
\overline{b}_{n\Delta \delta^{\sigma}}.
\]

  Using \eqref{2.1}, we see that the coefficients $c,c'$ satisfy the bounds
\begin{equation*}
c_{\Delta,\delta,l,h,n}, \; c_{\Delta,\delta,l,h,n}' \ll  \left(\frac{N_F(\delta\Delta)}{Q}\right)^{1/2} \left| b_{n\Delta \delta^{\sigma}} \right|.
\end{equation*}

  To estimate $S_W(M,Q)$, observe that we may freely truncate the sum over $h$ for
\begin{equation*}
 N_K(h) \le \frac{Q^2 N_K(\ell)}{N_F(\delta
\Delta^2)M} (QM)^{\varepsilon}=:H
\end{equation*}
since $\widetilde{W}_K$ has rapid decay. More precisely, if we let $S_W
(M,Q) = S'_W(M,Q) + E$ where $S'_W(M,Q)$ is the contribution to
$S_W(M,Q)$ from $0 < N_K(h) \leq H$, then $E \ll
(MQ)^{-100} \| b \|^2$. Using this and \eqref{2.1}, we then arrive at the following bound
\begin{equation} \label{hnot0}
S_W''(M,Q) \ll
M \sumprime_{\Delta} \sumprime_{\delta} \frac{1}{N_F(\delta)^{1/2}} \sum_{l | N_{F/K}(\Delta)} \frac{1}{N_K(l)} \sum_{0 < N_K(h) \leq H} \left| U(\Delta,\delta,l,h) \right|,
\end{equation}

  To estimate $U(\Delta,\delta,l,h)$, we note the following consequence of Lemma \ref{cubicls} (see \cite[(43)]{B&Y}): Suppose that $d_n$, $d_n'$ are arbitrary complex numbers supported on square-free primary $n \in \mathcal{O}_F$, with $N_F(n) \leq X$.  Then by Cauchy's inequality,
\begin{equation} \label{eq:HBcubic2}
\left| \sum_{m, n} d_m d_n' \leg{m}{n}_3 \right|  \leq \left( \sum_m |d_m|^2 \right)^{1/2} \left(\sum_{m} \left| \sum_n d_n' \leg{n}{m}_3 \right|^2 \right)^{1/2} \ll X^{2/3 + \varepsilon} \|d_m \| \cdot \|d_n' \|.
\end{equation}

  After using M\"{o}bius inversion to remove the coprimality condition $\left( N_{F/K}(n_1),N_{F/K}(n_2) \right)=1$ and the Mellin inversion formula to remove the weight function $\widetilde{W}_K$, we apply \eqref{eq:HBcubic2} to see that
\begin{equation*}
U(\Delta,\delta,l,h) \ll (QM)^{\varepsilon}\left(\frac{N_F(\delta \Delta)}{Q}\right)^{1/3}
\sumprime_{n} |b_{n}|^2.
\end{equation*}
  Inserting this into \eqref{hnot0} and summing trivially over all the other variables gives \eqref{C2e1}.
\end{proof}

\subsection{Completion of the proof of Theorem \ref{cubiclargesieve}}
    We note that it follows from \eqref{B1C1} -- \eqref{C22} that we have
\begin{equation}
\label{C2egen}
B_1(Q,M) \ll (QM)^{\varepsilon}\left(Q^{1-1/v} M +
Q^{1+2/(3v)}\right)
\end{equation}
for any $v\in \mn$. Theorem \ref{cubiclargesieve} now follows by taking $v=1$, $2$ and $3$.

\section{Proof of Theorem \ref{secmom}}
\label{section:secondmomentproof}

  In this section, we establish Theorem \ref{secmom} and a variant of it. Since these results are easy consequences of Theorem \ref{cubiclargesieve} and all steps are standard, we will only sketch the arguments. \newline

We shall first prove \eqref{secondm}.  Using the approximate functional equation \eqref{approxfuneq}, with $E=K, A=B=N_K(q)^{1/2}$ and Cauchy's inequality, we estimate the second moment in question by
\begin{equation*}
\sum\limits_{\substack{(q), \ q \in \mathcal{O}_K \\ N_K(q) \le Q \\ (q, 6)=1}} \ \sumstar\limits_{\substack{\chi \shortmod q\\ \chi^3=\chi_0}} \left| L(1/2+it,\chi) \right|^2 \leq 2 \sum\limits_{\substack{(q) , \ q \in \mathcal{O}_K \\ N_K(q) \le Q \\ (q, 6)=1}} \ \sumstar\limits_{\substack{\chi \shortmod q\\ \chi^3=\chi_0}} \left| \sum_{\substack{ m \text{ primary in } \mathcal{O}_K }} \frac{\chi(m)}{N_{K}(m)^{1/2 + it}} V_{it,K}\left(\frac{N_K(m)}{\sqrt{N_K(q)}}\right)
\right|^2.
\end{equation*}
 In view of \eqref{2.15}, we may truncate the sum over $m$ so that $N_K(m) \leq M:=(Q(1+|t|)^2)^{1/2 + \varepsilon}$ with a negligibly small error. \newline

Then we break the summations over $q$ and $m$ into dyadic intervals and remove the weight $V_{it}$ using the Mellin transform. We further write $m=d^2n$, where $n$ is square-free in $\mathcal{O}_K$, and use the Cauchy-Schwarz inequality again. Eventually, we arrive at sums of the form
\begin{equation*} \label{end}
\sum\limits_{\substack { d \text{ primary in } \mathcal{O}_K  \\ N_{K}(d)\le \sqrt{2M}}} \frac{1}{N_{K}(d)} \sum\limits_{\substack{(q), \ q \in \mathcal{O}_K \\ Q<N_K(q) \le 2 Q \\ (q,6)=1}}\ \sumstar\limits_{\substack{\chi \shortmod q\\ \chi^3=\chi_0}} \left| \ \sumstar_{\substack{m \text{ primary in } \mathcal{O}_K \\ M/N_{K}(d)^2<N_{K}(m)\le 2M/N_{K}(d)^2}} \frac{\chi(m)}{N_{K}(m)^{1/2 + it}} \right|^2
\end{equation*}
which we then estimate using Theorem \ref{cubiclargesieve}.  More precisely, we use
\eqref{final} with the last term, $Q^{11/9}+Q^{2/3}M$, in the minimum.  Plugging this bound in and summing trivially over $d$ gives \eqref{secondm}. \newline

Next we establish \eqref{eq:HeckeL}. It follows from \eqref{cubicatunits} that for $m$ square-free in $\mathcal{O}_K$, the character $\psi_m(n) = \leg{m}{n}_3$ can be regarded as a Hecke character of trivial infinite type with conductor $\mathfrak{f}$ satisfying $m/(3,m) | \mathfrak{f}$, $\mathfrak{f} | 9m$.  Thus the Hecke $L$-function $L(s, \psi_m)$, viewed as a degree $4$ $L$-function over $\mq$, has analytic conductor $\ll N_F(m) (1+ t^2)^2 = N_K(m)^2(1+t^2)^2$.  A variant on the above argument reduces the problem of estimating \eqref{eq:secondmomentHecke} to bounding
\begin{equation*}
 \sumstar_{N_K(m) \leq M} \left| \ \sumstar_{\substack{ n \text{ primary}, \ (n,6)=1 \\ N_F(n) \leq c_1(M(1+t^2))^{1+\varepsilon}}} \frac{\chi_n(m)}{N_F(n)^{1/2 + it}} \right|^2,
\end{equation*}
where $c_1$ is some positive constant and the star attached to the sum over $n$ indicates that the sum runs over square-free elements $n \in \mathcal{O}_F$.  We further note that it follows from \eqref{B21} and \eqref{C2egen} that we have the bound $B_2(Q,M) \ll (QM)^{\varepsilon}(Q^{4/3} + Q^{1/2} M)$. This together with the duality principle then gives the desired estimate. \newline

  We also establish the following variant of \eqref{eq:HeckeL}, which will be needed in the proof of Theorem \ref{firstmoment}. Let $d \in \mathcal{O}_K$ with $N_K(d) \ll Q$, then
\begin{equation}
\label{eq:Heckevariant}
 \sum_{N_K(m) \leq M} \frac{|L(1/2 + it, \psi_{m(6d)^3})|}{\sqrt{N_K(m)}} \ll Q^{\varepsilon}M^{3/4 + \varepsilon} (1+|t|)^{4/3 + \varepsilon}.
\end{equation}
To prove this version, we factor $m$ as $m_1 m_2^2 m_3^3$ where $m_1m_2$ is square-free in $\mathcal{O}_K$.  Then $\psi_{m(6d)^3}$ equals $\psi_{m_1} \overline{\psi}_{m_2}$ times a principal character.  For each fixed $m_2$, we then generalize \eqref{eq:HeckeL} to give
\begin{equation*}
 \sumstar_{\substack{N_K(m_1) \leq M_1 \\ (m_1, m_2)=1}} \left| L(1/2 + it, \psi_{m_1} \overline{\psi}_{m_2} \right|^2 \ll M_1^{3/2+\varepsilon} m_2^{4/3+\varepsilon} (1+|t^2|)^{4/3+\varepsilon}.
\end{equation*}
With this bound and Cauchy's inequality, it is easy to sum over $m_2$ trivially, giving \eqref{eq:Heckevariant}.

\section{Proof of Theorem \ref{firstmoment}}
\label{sec3}
  We let
\begin{equation*}
\mathcal{M} :=  \sum_{\substack{ (q), \ q \in \mathcal{O}_K \\(q,6)=1}}\;  \sumstar_{\substack{\chi \bmod{q} \\ \chi^3 = \chi_0}} L(1/2, \chi) w\leg{N_K(q)}{Q}.
\end{equation*}

   We note that by Lemma \ref{lemma:cubicclass}, we have
\begin{equation*}
\mathcal{M} = \sumprime_{\substack{n \text{ primary} \\ (n, 6)=1}} L(1/2, \chi_n) w\leg{N_K(N_{F/K}(n))}{Q}= \sumprime_{\substack{n \text{ primary} \\ (n, 6)=1}} L(1/2, \chi_n) w\leg{N_F(n)}{Q},
\end{equation*}
   where the prime indicates that the sum runs over square-free elements $n \in \mathcal{O}_F$ that have no $K$-rational prime divisors. \newline

   Applying the approximate functional equation \eqref{approxfuneq} with $G(s)=1, A_n B =N_F(n)$ and writing $V$ for $V_F$,  gives $\mathcal{M} = \mathcal{M}_1 + \mathcal{M}_2$, where
\begin{align*}
 \mathcal{M}_1 &= \sumprime_{\substack{n \text{ primary} \\ (n, 6)=1}} \sum_{\substack { (m) \\ 0 \neq m \in
  \mathcal{O}_K}} \frac{\chi_n(m)}{\sqrt{N_K(m)}}V\leg{N_K(m)}{A_n} w\left(\frac{N_F(n)}{Q}\right), \; \mbox{and} \\
 \mathcal{M}_2 &= \sumprime_{\substack{n \text{ primary} \\ (n, 6)=1}} \sum_{\substack { (m) \\ 0 \neq m \in
  \mathcal{O}_K}} \frac{g_{K}(\chi)}{N_K(n)^{1/2}} \frac{\overline{\chi}_n(m)}{\sqrt{N_K(m)}} V\leg{N_K(m)}{B} w\left(\frac{N_F(n)}{Q}\right).
\end{align*}

    It remains to evaluate $\mathcal{M}_1$ and bound $\mathcal{M}_2$.  The results are summarized with
\begin{lemma}
 We have
\begin{equation}
\label{eq:M1estimate}
 \mathcal{M}_1 =  C_0 Q \widetilde{w}(1) + O(Q^{1/2 + \varepsilon} A^{3/4+ \varepsilon} + Q A^{-1/6 + \varepsilon}),
\end{equation}
and
\begin{equation}
\label{eq:M2bound}
 \mathcal{M}_2 \ll Q^{5/6 + \varepsilon} B^{1/6 + \varepsilon}+Q^{2/3 + \varepsilon} B^{5/6 + \varepsilon}.
\end{equation}
\end{lemma}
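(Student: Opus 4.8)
The plan is to handle $\mathcal{M}_1$ and $\mathcal{M}_2$ separately: $\mathcal{M}_1$ carries the main term and its errors arise from a cube/non-cube dichotomy in $m$, while $\mathcal{M}_2$ is bounded entirely through the analytic continuation of the Gauss-sum Dirichlet series $h(r,s;\lambda)$ of \eqref{h}. In both cases the first move is to use \eqref{wchi} to replace $g_K(\chi_n)$ by $\overline{(\frac{\sqrt3}{n})}_3\,g_{3,F}(n)$, and then to recombine the cubic residue symbol attached to $m$ with $g_{3,F}(n)$ into a single Gauss sum $g_{3,F}(\,\cdot\,m,n)$ via \eqref{grg1} and \eqref{eq:gmult}.

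To evaluate $\mathcal{M}_1$ I would split the sum over $m$ according to whether $m$ is a perfect cube. For the diagonal $m=c^3$ one has $\chi_n(c^3)=1$ whenever $(c,n)=1$, and the inner sum over the admissible $n$ (primary, square-free, with no $K$-rational prime divisor, coprime to $6c$) has main term $\beta(c)\,Q\,\widetilde{w}(1)$ arising from the simple pole at $s=1$ of the corresponding Dirichlet series, with $\beta(c)$ an explicit Euler product. Assembling the sum over $c$, inserting the contour integral defining $V$ from \eqref{approxfuneq}, and shifting the $s$-contour, the pole at $s=0$ produces the main term $C_0 Q\widetilde{w}(1)$ with $C_0$ the stated Euler product, while the pole at $w=1$ of the cube-root series $\sum_c\beta(c)N_K(c)^{-w}$ produces the secondary contribution of size $Q A^{-1/6}$; the square-free sieving leaves only a lower-order $O(Q^{1/2+\varepsilon})$ error.

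For the off-diagonal terms (non-cube $m$) I would exploit that the inner sum over $n$ is, up to finitely many Euler factors, the central value $L(1/2,\psi_m)$ of the cubic Hecke $L$-function \eqref{eq:HeckeL}, which is entire for non-cube $m$. Applying the approximate functional equation \eqref{approxfuneq} to it shortens the effective range of $n$ to $N_F(n)\ll N_K(m)\ll A$. After Cauchy--Schwarz in $m$ the task becomes to bound $\sum_{N_K(m)\le A}\big|\sum_{n} b_n\,(\frac{m}{n})_3\big|^2$, which is precisely the dual norm $C_1(A,A)=B_1(A,A)$ underlying Theorem~\ref{cubiclargesieve}. The bound \eqref{C2egen} with $v=2$ gives $B_1(A,A)\ll A^{3/2+\varepsilon}$, so this part contributes $\ll Q^{1/2}A^{3/4+\varepsilon}$, matching \eqref{eq:M1estimate}.

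Finally, for $\mathcal{M}_2$ I would, after the reduction described above, write the inner sum over $n$ as $g_{3,F}(\sqrt3\,m,n)$ and expand $w$ by Mellin inversion, so that this sum becomes a contour integral of $h(\sqrt3\,m,s;\lambda)$ for a suitable Hecke character $\lambda\pmod{18}$. Moving the contour past the pole at $s=4/3$ and invoking Lemma~\ref{lem1}, the residue (bounded by $N_F(r_1)^{-1/6}$ for $\sqrt3\,m=r_1r_2^2r_3^3$) contributes $\ll Q^{5/6}\sum_{N_K(m)\le B}N_K(m)^{-5/6}\ll Q^{5/6}B^{1/6}$, while the shifted integral, estimated by the convexity bound of Lemma~\ref{lem1}, contributes $\ll Q^{2/3}\sum_{N_K(m)\le B}N_K(m)^{-1/6}\ll Q^{2/3}B^{5/6}$, which is \eqref{eq:M2bound}. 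I expect the main obstacle to be the bookkeeping imposed by the primary, square-free, no-$K$-rational-divisor and coprimality conditions: these must be separated by repeated M\"obius inversion before either Lemma~\ref{lem1} or Theorem~\ref{cubiclargesieve} applies, and the one genuinely delicate analytic ingredient is the precise location and residue bound of the pole of $h(r,s;\lambda)$ supplied by Lemma~\ref{lem1}, together with the necessity of first shortening the $n$-sum to length $A$ so that the cubic large sieve is efficient.
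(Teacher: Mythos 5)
Your plan is essentially the paper's: split $\mathcal{M}_1$ into the cube ($m=c^3$) contribution, whose pole at $s=1$ yields $C_0Q\widetilde{w}(1)$ with the $QA^{-1/6}$ loss coming from the abscissa of convergence of the resulting series over cube roots, and a non-cube remainder controlled by the second moment of the dual family $L(1/2+it,\psi_m)$ over $N_K(m)\ll A$; and bound $\mathcal{M}_2$ by Mellin inversion against $h(\sqrt{3}m,s;\lambda)$ and Lemma~\ref{lem1}. Two execution-level points where you quietly depart from (or must be more careful than) the paper. First, in the off-diagonal of $\mathcal{M}_1$ the dual sum coming from the approximate functional equation of $L(s,\psi_m)$ over $F$ runs over all ideals of $\mathcal{O}_F$ coprime to $6$, not only those free of $K$-rational prime divisors, so the relevant norm is $B_2(A,A)$ rather than $C_1(A,A)=B_1(A,A)$; one still gets $A^{3/2+\varepsilon}$ via \eqref{B21} and \eqref{C2egen}, and general (non-square-free) $m$ must first be factored as $m_1m_2^2m_3^3$ before the large sieve applies — this is exactly what the paper packages as \eqref{eq:Heckevariant}. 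Second, and more substantively, for $\mathcal{M}_2$ the sieve removing $K$-rational prime divisors of $n$ produces a sum over $d\in\mathcal{O}_K$ with $h(\sqrt{3}d\eta_d m,\cdot\,;\lambda)$ evaluated at argument of norm $\asymp N_K(d)^2N_K(m)^2$ and length $Q/N_K(d)^2$; if you shift only to $\Re(s)=1/2+\varepsilon$ (the edge of Lemma~\ref{lem1}) the convexity term gives $Q^{1/2}N_K(d)^{-1/2}N_K(m)^{1/2}$ per $d$, whose $d$-sum costs an extra $Q^{1/4}$ and destroys the bound — the paper repairs this by using Lemma~\ref{lem1} only for $N_K(d)\le Y$ and the trivial bound beyond, optimizing $Y=Q^{1/3}N_K(m)^{-1/3}$ to get $Q^{2/3}N_K(m)^{1/3}$. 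Your exponents ($Q^{2/3}\sum N_K(m)^{-1/6}$) are instead those of a single shift to $\Re(s)=2/3$, i.e.\ $\sigma=7/6$ in Lemma~\ref{lem1}, where the per-$d$ contribution is $Q^{2/3}N_K(d)^{-1}N_K(m)^{1/3}$ and the $d$-sum converges; this is a legitimate and in fact slightly cleaner route to Lemma~\ref{lemma:sumofGauss}, but you should make the contour position and the $d$-sum explicit, since they are precisely where a naive reading of your sketch would fail.
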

   Choosing $B = Q^{7/19}$, whence $A = Q^{12/19}$  gives Theorem \ref{firstmoment}.  The constant $C_0$ is given more explicitly in \eqref{eq:c}.
\subsection{Evaluating $\mathcal{M}_1$, the main term}
\label{section:M1}
  We detect the condition that a primary $n \in \mathcal{O}_F$ has no $K$-rational prime divisor using the formula
\begin{equation}
\label{eq:ratmob}
 \sum_{\substack{d \text{ primary in } \mathcal{O}_K \\ d |n}} \mu_{K}(d) =
\begin{cases}
 1, \quad \text{$n$ has no $K$-rational prime divisor}, \\
 0, \quad \text{otherwise}.
\end{cases}
\end{equation}
   We apply this formula and for fixed $d \in \mathcal{O}_K$, we fix a unit $\eta_d \in U_F$ such that $d\eta_d$ is primary (in $\mathcal{O}_F$).  We then make a change of variable $n \rightarrow d\eta_d n$ so that $n$ is also primary.  Further note that as $n$ is co-prime to $6$, any divisor $d$ of $n$ is also co-prime to $6$.  Hence $d$ is not divisible by any primes in $\mathcal{O}_K$ that ramifies in $\mathcal{O}_F$. Thus if $d$ is square-free in $\mathcal{O}_K$, it is also square-free as an element of $\mathcal{O}_F$. The condition that $d\eta_d n$ is square-free then simply means that $n$ is square-free and $(d,n) = 1$. We further define $A$ by $AB=Q$ so that $A_n = A \frac{N_F(n)}{Q} \asymp A$ for all $n$ under consideration, in view of the support of $w$. Thus
\begin{align*}
 \mathcal{M}_1 = \sum_{\substack{d \text{ primary in } \mathcal{O}_K \\ (d,6)=1 }} \mu_{K}(d) \sum_{\substack { (m) \\ 0 \neq m \in
  \mathcal{O}_K}}  \frac{\leg{m}{d}_3}{\sqrt{N_K(m)}}  \ \sumstar_{\substack{n  \text{ primary}  \\ (n,6d) = 1}} \leg{m}{n}_3 V\left(\frac{N_K(m)}{A} \frac{Q}{N_F(nd)} \right) w\left(\frac{N_F(nd)}{Q}\right).
\end{align*}

  Using M\"{o}bius inversion to detect the condition that $n$ is square-free, we get
\begin{equation*}
 \mathcal{M}_{1} = \sum_{\substack{d \text{ primary in } \mathcal{O}_K \\ (d,6)=1 }} \mu_{K}(d) \sum_{\substack{ l \text{ primary} \\ (l,6d) = 1}} \mu_{F}(l) \sum_{\substack { (m) \\ 0 \neq m \in
  \mathcal{O}_K}}  \frac{\leg{m}{dl^2}_3}{\sqrt{N_K(m)}} \mathcal{M}_1(d,l,m),
\end{equation*}
where
\[
 \mathcal{M}_1(d,l,m) = \sum_{\substack{n  \text{ primary}  \\ (n,6d) = 1}} \leg{m}{n}_3 V\left(\frac{N_K(m)}{A} \frac{Q}{N(ndl^2)} \right) w\left(\frac{N_F(ndl^2)}{Q}\right).
\]
Next we use the Mellin transform of the weight function to express the sum over $n$ as a contour integral involving the Hecke $L$-function.  By Mellin inversion,
\[ 
 V\left(\frac{N_K(m)}{A} \frac{Q}{N_F(ndl^2)} \right) w\left(\frac{N_F(ndl^2)}{Q}\right) = \frac{1}{2 \pi i} \int\limits_{(2)} \leg{Q}{N_F(ndl^2)}^s \widetilde{f}(s) \dif s,
\; \mbox{where} \;
\widetilde{f}(s) = \int\limits_0^{\infty} V\left(\frac{N_K(m)}{A} x^{-1} \right) w(x) x^s \frac{\dif x}{x}.
\]
Integration by parts and using \eqref{2.07} shows $\widetilde{f}(s)$ is a function satisfying the bound for all $\Re (s) >0$, $R>0$,
\begin{equation*}
 \widetilde{f}(s) \ll (1 + |s|)^{-R} (1 + N_K(m)/A)^{-R}.
\end{equation*}

   We then have
\begin{equation*}
 \mathcal{M}_1(d,l,m) = \frac{1}{2 \pi i} \int\limits_{(2)} \leg{Q}{N_F(dl^2)}^s L(s, \psi_{m(6d)^3}) \widetilde{f}(s) ds,
\end{equation*}
   where $L\left( s,\psi_{m(6d)^3} \right)$ is defined as in \eqref{eq:HeckeL}. \newline

We estimate $\mathcal{M}_{1}$  by moving the contour to the half line.  When $m$ is a cubic the Hecke $L$-function has a pole at $s=1$.  We set $\mathcal{M}_0$ to be the contribution to $\mathcal{M}_{1}$ of these residues, and $\mathcal{M}_1'$ to be the remainder. \newline

  We evaluate $\mathcal{M}_0$ first. Note that
\begin{equation*}
 \mathcal{M}_0 =  \sum_{\substack{d \text{ primary in } \mathcal{O}_K \\ (d,6)=1 }} \mu_{K}(d) \sum_{\substack{ l \text{ primary} \\ (l,6d) = 1}} \mu_{F}(l) \sum_{\substack { (m) \\ 0 \neq m \in
  \mathcal{O}_K}}  \frac{\leg{m}{dl^2}_3}{\sqrt{N_K(m)}} \frac{Q}{N_F(dl^2)} \widetilde{f}(1) \text{Res}_{s=1} L(s, \psi_{m(6d)^3}),
\end{equation*}
where using the Mellin convolution formula shows
\begin{equation} \label{3.4}
 \widetilde{f}(1) = \int\limits_0^{\infty} V\left(\frac{N_K(m)}{A} x^{-1} \right) w(x) \dif x = \frac{1}{2 \pi i} \int\limits_{(2)} \leg{A}{N_K(m)}^s \widetilde{w}(1+s) \gamma_{0,F}(s) \frac{\dif s}{s}, \; \; \mbox{with} \; \; \widetilde{w}(s) = \int\limits_0^{\infty} w(x) x^{s-1} \dif x.
\end{equation}

   From the discussion in Section \ref{sec2.4}, it is not difficult to see that $\psi_{m(6d)^3}$ is a principal character only if $m$ is a cubic, in which case
\begin{equation*}
 L \left( s, \psi_{m(6d)^3} \right) = \zeta_{F}(s) \prod_{\mathfrak{p} | 6dm} \left( 1 - N_F(\mathfrak{p} \right)^{-s}),
\end{equation*}
   where $\mathfrak{p}$ runs over primes in $\mathcal{O}_F$. \newline

  Let $C_1$ be the residue of $\zeta_{F}(s)$ at $s=1$, then
\begin{equation*}
 \mathcal{M}_0 =  C_1 Q  \sum_{\substack { (m) \\ 0 \neq m \in
  \mathcal{O}_K}}\frac{\widetilde{f}(1)}{N_K(m)^{3/2}} \prod_{ \mathfrak{p}| 6m} \left( 1- N_F(\mathfrak{p})^{-1} \right)  \sum_{\substack{d \text{ primary in } \mathcal{O}_K \\ (d,6m)=1 }}\frac{ \mu_{K}(d) }{N^2_K(d)}\prod_{\mathfrak{p} | d} \left( 1 - N_F(\mathfrak{p})^{-1} \right)  \sum_{\substack{ l \text{ primary} \\ (l,6md) = 1}} \frac{\mu_{F}(l)}{N_F(l^2)}.
\end{equation*}

   Computing the sum over $l$ explicitly, we obtain
\begin{align*}
  \mathcal{M}_0= &\frac{C_1 Q}{\zeta_{F}(2)} \sum_{\substack { (m) \\ 0 \neq m \in
  \mathcal{O}_K}}\frac{\widetilde{f}(1)}{N_K(m)^{3/2}} \prod_{ \mathfrak{p}| 6m} \left( 1- N_F(\mathfrak{p})^{-1} \right) \sum_{\substack{d \text{ primary in } \mathcal{O}_K \\ (d,6m)=1 }} \frac{ \mu_{K}(d) }{N^2_K(d)}\prod_{\mathfrak{p} | d} \left( 1 - N_F(\mathfrak{p})^{-1} \right)   \prod_{\mathfrak{p} | 6md} \left( 1- N_F(\mathfrak{p})^{-2} \right)^{-1} \\
 =& \frac{C_1 Q}{\zeta_{F}(2) } \sum_{\substack { (m) \\ 0 \neq m \in
  \mathcal{O}_K}}\frac{\widetilde{f}(1)}{N_K(m)^{3/2}}\prod_{\mathfrak{p} | 6m} \left( 1+ N_F(\mathfrak{p})^{-1} \right)^{-1}  \sum_{\substack{d \text{ primary in } \mathcal{O}_K \\ (d,6m)=1 }}\frac{ \mu_{K}(d) }{N^2_K(d)}\prod_{\mathfrak{p} | d} \left(1 + N_F(\mathfrak{p})^{-1} \right)^{-1}.
\end{align*}

   We define
\begin{align*}
  C_2= \sum_{\substack{d \text{ primary in } \mathcal{O}_K \\ (d,6)=1 }} \frac{ \mu_{K}(d) }{N^2_K(d)}\prod_{\mathfrak{p} | d} \left( 1 + N_F(\mathfrak{p})^{-1} \right)^{-1}.
\end{align*}
$C_2$ is clearly a constant. Using this, we arrive at
\begin{align*}
 \mathcal{M}_0 = \frac{C_1C_2 Q}{\zeta_{F}(2)} \sum_{\substack { (m) \\ 0 \neq m \in
  \mathcal{O}_K}}\frac{\widetilde{f}(1)}{N_K(m)^{3/2}}\prod_{\mathfrak{p} | 6m} \left(1+ N_F(\mathfrak{p})^{-1} \right)^{-1}
\prod_{\varpi | m/(m,6)} \left( 1-N_K(\varpi)^{-2} \prod_{\mathfrak{p} | \varpi} \left(1 + N_F(\mathfrak{p})^{-1} \right)^{-1} \right)^{-1},
\end{align*}
  where $\varpi$ runs over primes in $\mathcal{O}_K$. Let
\begin{equation*}
 C(u) = \sum_{\substack { (m) \\ 0 \neq m \in
  \mathcal{O}_K}}N_K(m)^{-u} \prod_{\mathfrak{p} | 6m} \left( 1+ N_F(\mathfrak{p})^{-1} \right)^{-1}
 \prod_{\varpi | m/(m,6)} \left(1-N_K(\varpi)^{-2} \prod_{\mathfrak{p} | \varpi} \left( 1 + N_F(\mathfrak{p} \right)^{-1})^{-1} \right)^{-1},
\end{equation*}
which is holomorphic and bounded for $\Re (u) \geq 1 + \delta > 1$.  Then
\begin{equation*}
 \mathcal{M}_0 = \frac{C_1C_2 Q}{ \zeta_{F}(2) } \frac{1}{2 \pi i} \int\limits_{(2)} A^s C(3/2 + 3s) \widetilde{w}(1+s)\gamma_{0,F}(s)  \frac{\dif s}{s} .
\end{equation*}

  We move the contour of integration to $-1/6 + \varepsilon$, crossing a pole at $s=0$ only.  The new contour contributes $O(A^{-1/6 + \varepsilon} Q)$, while the pole at $s=0$ gives
\begin{equation}
\label{eq:c}
  C_0 Q \widetilde{w}(1), \quad \text{where} \quad C_0= C_1C_2 \zeta^{-1}_{F}(2)C(3/2) .
\end{equation}
Note that $C(u)$ converges absolutely at $u=3/2$ so it is easy to express $C(3/2)$ explicitly as an Euler product, if desired.
We then conclude that
\begin{align}
\label{m0}
 \mathcal{M}_0 = C_0 Q  \widetilde{w}(1)+O(QA^{-1/6 + \varepsilon} ).
\end{align}

\subsection{Evaluating $\mathcal{M}_1$, the remainder term}
\label{section:remainderterm}

 In this section, we estimate $\mathcal{M}'_1$. By bounding everything with absolute values, we see that for any $R>0$,
\begin{align*}
  \mathcal{M}_1' \ll  \sum_{\substack{d \text{ primary in } \mathcal{O}_K \\ N_K(d) \ll \sqrt{Q}}}\frac{1}{\sqrt{N_K(d^2)}} \sum_{\substack{l \text{ primary} \\ N_F(l) \ll \sqrt{Q}}} \frac{1}{\sqrt{N_F(l^2)}}  \sum_{\substack { (m) \\ 0 \neq m \in
  \mathcal{O}_K}} \frac{\sqrt{Q}}{\sqrt{N_K(m)}} & \left( 1+N_K(m)/A \right)^{-R} \\
  & \times \int\limits_0^{\infty} \left| L \left( 1/2 + it, \psi_{m(6d)^3} \right) \right| (1+|t|)^{-R} \dif t.
\end{align*}

   Apply the estimation from \eqref{eq:Heckevariant} and note that we can assume $N(m) \ll A^{1+\varepsilon}$ here, we
obtain
\begin{equation*}
\mathcal{M}_1' \ll Q^{1/2 + \varepsilon} A^{3/4+ \varepsilon}.
\end{equation*}
  This combined, with \eqref{m0}, gives \eqref{eq:M1estimate}.

\subsection{Estimating $\mathcal{M}_2$}
\label{section:M2}

Using \eqref{wchi}, we have
\begin{align*}
\label{eq:M2recall}
 \mathcal{M}_{2} &=  \sum_{\substack { (m) \\ 0 \neq m \in
  \mathcal{O}_K}} \frac{1}{\sqrt{N_K(m)}} V\leg{N_K(m)}{B} \sumprime_{\substack{n  \text{ primary}  \\ (n,6) = 1}} \frac{\overline{\chi}_n \left( \sqrt{3}m \right) g_{3,F}(n)}{ \sqrt{N_F(n)}}  w\left(\frac{N_F(n)}{Q}\right).
\end{align*}
  Now we need the following result:
\begin{lemma}
\label{lemma:sumofGauss}
 For any $m \in \mathcal{O}_K$, write $m = m_0 m_1$ where $m_0$ is a unit times a power of $1+i$ and a power of $3$ while $(m_1,6)=1$, $m_1$ primary in  $\mathcal{O}_K$. We further write $m_1=m_2m^2_3m^3_4$ with $m_2m_3$ being square-free in $\mathcal{O}_K$. Then we have
\begin{equation*}
\label{eq:sumofGausssums}
H'(m,Q):= \sumprime_{\substack{n  \text{ primary}  \\ (n,6) = 1}}\frac{\overline{\chi}_n \left( \sqrt{3}m \right)g_{3,F}(n)}{N_F(n)^{1/2}}  w\left(\frac{N_F(n)}{Q}\right) \ll  Q^{2/3 + \varepsilon} N_K(m)^{1/3+\varepsilon}+ Q^{5/6 + \varepsilon}N_K(m)^{\varepsilon} N_K(m_2)^{-1/3+\varepsilon}.
\end{equation*}
\end{lemma}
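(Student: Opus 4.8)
The plan is to recognise $H'(m,Q)$ as a $w$-smoothed sum of the Dirichlet series $h(r,s;\lambda)$ from \eqref{h} and then to shift a Mellin contour past the pole of $h$ at $s=4/3$. Since $(n,6)=1$ forces $(n,\sqrt{3})=1$, and the symbol $\leg{\sqrt{3}m}{n}_3$ vanishes unless $(\sqrt{3}m,n)=1$, relation \eqref{eq:gmult} gives $\overline{\chi}_n(\sqrt{3}m)\,g_{3,F}(n)=\overline{\leg{\sqrt{3}m}{n}}_3\,g_{3,F}(n)=g_{3,F}(\sqrt{3}m,n)$. By \eqref{2.1} this Gauss sum already vanishes unless $n$ is square-free, so the square-free condition in $H'(m,Q)$ is automatic and only the restriction that $n$ has no $K$-rational prime divisor must be removed by hand. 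I would remove it exactly as in Section~\ref{section:M1}, via \eqref{eq:ratmob} and the substitution $n\to d\eta_d n$ with $d$ primary in $\mathcal{O}_K$, producing an outer sum $\sum_{d}\mu_K(d)(\cdots)$ over square-free $d$ coprime to $6m$.

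With $n=d\eta_d n'$, the twisted multiplicativity \eqref{eq:gtwisted} separates the $d$-part $g_{3,F}(\sqrt{3}m,d\eta_d)$, of modulus $N_K(d)$ by \eqref{2.1}, from the $n'$-part, leaving a residual symbol $\overline{\leg{n'}{d\eta_d}}_3$. Applying cubic reciprocity to this symbol and absorbing the units together with the factors $\sqrt{3}$ and $m_0$ into a single Hecke character $\lambda\pmod{18}$ (these are Hecke characters by \eqref{cubicatunits}), the remaining $n'$-dependence collapses, by \eqref{eq:gmult}, to $\overline{\lambda(n')}\,g_{3,F}(dm_1,n')$. Mellin inversion of $w$ then writes the inner sum over $n'$ as
\[
\frac{1}{2\pi i}\int_{(2)}\widetilde{w}(s)\Bigl(\tfrac{Q}{N_K(d)^2}\Bigr)^{s} h\bigl(dm_1,\tfrac12+s;\overline{\lambda}\bigr)\,\dif s ,
\]
so that $H'(m,Q)$ becomes a sum over $d$ of such integrals times coefficients of modulus $\le 1$.

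The analytic heart is to move each contour to $\Re(s)=2/3$, i.e.\ to the line $\Re(\tfrac12+s)=7/6$, crossing the simple pole of $h(dm_1,\cdot\,;\overline{\lambda})$ at $\tfrac12+s=4/3$. On the shifted line Lemma~\ref{lem1} (with $\sigma_1=3/2+\varepsilon$, $\sigma=7/6$) gives $h\ll N_F(dm_1)^{1/6+\varepsilon}(1+t^2)^{1/3+\varepsilon}$, and since $N_F(dm_1)=N_K(dm_1)^2$ the integral is $\ll Q^{2/3}N_K(d)^{-4/3}(N_K(d)N_K(m_1))^{1/3+\varepsilon}$; summing over $d$ (the series $\sum N_K(d)^{-1+\varepsilon}$ costing only $Q^{\varepsilon}$) yields the first term $Q^{2/3+\varepsilon}N_K(m)^{1/3+\varepsilon}$. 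For the residue I would use that $dm_1=(dm_2)m_3^2m_4^3$ with $dm_2m_3$ square-free, so the residue bound of Lemma~\ref{lem1} has $r_1=dm_2$ and gives $\ll N_F(dm_1)^{\varepsilon}N_F(dm_2)^{-1/6}=N_K(d)^{-1/3}N_K(m_2)^{-1/3}N_K(m)^{\varepsilon}$; multiplying by $(Q/N_K(d)^2)^{5/6}$ and summing the absolutely convergent series $\sum N_K(d)^{-2+\varepsilon}$ produces the second term $Q^{5/6+\varepsilon}N_K(m)^{\varepsilon}N_K(m_2)^{-1/3+\varepsilon}$.

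The main obstacle I anticipate is bookkeeping rather than any new idea: correctly tracking how the factorisation $m=m_0m_2m_3^2m_4^3$ interacts with the auxiliary variable $d$ inside the argument of $h$, so that the square-free part seen by the residue bound is exactly $dm_2$ (giving the $N_K(m_2)^{-1/3}$ saving) while the full $N_K(m)^{1/3}$ appears only in the shifted-contour error term. One must also check that the line $\Re(\tfrac12+s)=7/6$ stays at distance $>1/6$ from $s'=4/3$ except at the single point $t=0$, which is harmless, and that the vertical growth $(1+t^2)^{1/3+\varepsilon}$ is absorbed by the rapid decay of $\widetilde{w}$.
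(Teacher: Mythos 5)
Your proposal is correct and follows essentially the same route as the paper: M\"obius inversion over $K$-rational divisors $d$ via \eqref{eq:ratmob}, the twisted multiplicativity \eqref{eq:gtwisted} of the Gauss sum, Mellin inversion leading to the series $h(\cdot\,,1/2+s;\lambda)$ of \eqref{h}, and Lemma \ref{lem1} for the contour shift and the residue at $1/2+s=4/3$, which produces the $N_K(m_2)^{-1/3}$ saving from $r_1=dm_2$. The only (harmless) organizational difference is that the paper shifts only to $\Re(1/2+s)=1+\varepsilon$ (its Lemma \ref{lemma:Hbound}) and then splits the $d$-sum at a parameter $Y$, using the trivial bound $H(l,X)\ll X$ for $N_K(d)>Y$, whereas you shift further to $\Re(1/2+s)=7/6$ (in practice $7/6+\varepsilon$, to respect the condition $|s-4/3|>1/6$ of Lemma \ref{lem1}) so that the $d$-sum of the shifted-contour error already converges after the truncation $N_K(d)\ll\sqrt{Q}$; both yield the identical final bound.
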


Our first move in the proof of Lemma \ref{lemma:sumofGauss} is to use
M\"{o}bius inversion given in \eqref{eq:ratmob}, to remove the condition that $n$ has no $K$-rational prime divisor.  For fixed $d \in \mathcal{O}_K, (d, 6)=1$, we fix a unit $\eta_d \in U_F$ such that $d\eta_d$ is primary (in $\mathcal{O}_F$), we then make a change of variable $n \rightarrow d\eta_d n$ so that $n$ is also primary. It follows from  \eqref{eq:gtwisted} that
\begin{align*}
 g_{3,F}(d\eta_dn) = \overline{\leg{d\eta_d}{n}}_3 g_{3,F}(d\eta_d) g_{3,F}(n).
\end{align*}

  We use the notation $\widetilde{g}_{3,F}(d) = g_{3,F}(d\eta_d) N_F(d)^{-1/2}$ so that $|\widetilde{g}_{3,F}(d)| \leq 1$ by \eqref{2.1}. It follows further from \eqref{2.1} that $g_{3,F}(n) = 0$ unless $n$ is square-free. This gives
\begin{equation*}
\label{eq:M2sieved}
 H'(m,Q) =  \sum_{\substack{d \text{ primary in } \mathcal{O}_K \\ (d,6)=1 }}  \mu_{K}(d) \tilde{g}_{3,F}(d)\overline{\chi}_d \left( \sqrt{3}m \right) H \left( \sqrt{3} d \eta_d m, Q/N_K(d)^2 \right), 
\end{equation*}
where
\begin{equation*}
H \left( \sqrt{3} d \eta_d m, X \right) =  \sum_{\substack{n \text{ primary }\\ (n, 6)=1}}\overline{\chi}_{n} \left( \sqrt{3} d \eta_d m \right)  \frac {g_{3,F}(n)}{N_F(n)^{1/2}} w\left(\frac{N_F(n)}{X}\right).
\end{equation*}
We estimate $H$ with the following
\begin{lemma}
\label{lemma:Hbound}
 For any $l \in \mathcal{O}_F$, write $l = l_0 l_1$ where $l_0$ is a unit times a power of $1+i$ and a power of $\sqrt{3}$ while $l_1$ is primary. We further write $l_1=l_2l^2_3l^3_4$ with $l_2l_3$ square-free.  Then we have
\begin{equation*}
\label{eq:Hbound}
 H(l, X) \ll X^{1/2 + \varepsilon} N_F(l_1)^{1/4+\varepsilon} + X^{5/6}N_F(l_1)^{\varepsilon} N_F(l_2)^{-1/6}.
\end{equation*}
\end{lemma}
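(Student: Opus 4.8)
The plan is to recognize $H(l,X)$ as a smoothly weighted sum of the twisted Gauss-sum Dirichlet series $h(r,s;\lambda)$ from \eqref{h}, whose analytic behaviour is already governed by Lemma \ref{lem1}. First I would unfold the character. Since $\chi_n(l)=\leg{l}{n}_3$ vanishes unless $(l,n)=1$, only the coprime terms survive, and for these \eqref{eq:gmult} (applied with $r=1$, $s=l$) gives $\overline{\chi}_n(l)\,g_{3,F}(n)=g_{3,F}(l,n)$. Writing $l=l_0l_1$ as in the statement, the factor $l_0$ is a unit times powers of $1+i$ and $\sqrt{3}$, so by \eqref{cubicatunits} and the remarks following it the function $n\mapsto\overline{\leg{l_0}{n}}_3$ is a Hecke character $\lambda\pmod{18}$ of trivial infinite type, with the condition $(n,6)=1$ automatically built into $\lambda$. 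Hence
\[
H(l,X)=\sum_{\substack{n\text{ primary}\\(n,l_1)=1}}\frac{\lambda(n)\,g_{3,F}(l_1,n)}{N_F(n)^{1/2}}\,w\!\left(\frac{N_F(n)}{X}\right),
\]
whose summand is exactly the defining term of $h(l_1,\cdot\,;\lambda)$.

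Next I would apply Mellin inversion to $w$, writing $\widetilde{w}(s)=\int_0^\infty w(x)x^{s-1}\,dx$, which decays rapidly on vertical lines because $w$ is smooth and compactly supported. Using $|g_{3,F}(l_1,n)|\le N_F(n)^{1/2}$ from \eqref{2.1}, the series for $h(l_1,\tfrac12+s;\lambda)$ converges absolutely for $\Re(s)>1$, and so for $c>1$,
\[
H(l,X)=\frac{1}{2\pi i}\int_{(c)}\widetilde{w}(s)\,X^{s}\,h\!\left(l_1,\tfrac12+s;\lambda\right)\,ds .
\]

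Then I would shift the contour left to $\Re(s)=\tfrac12+\varepsilon$. By Lemma \ref{lem1}, $h(l_1,\tfrac12+s;\lambda)$ is holomorphic except for a possible pole at $\tfrac12+s=4/3$, i.e. $s=5/6$, which lies between the two contours; the horizontal segments vanish by the rapid decay of $\widetilde{w}$. The residue there contributes
\[
\widetilde{w}(5/6)\,X^{5/6}\operatorname*{Res}_{u=4/3}h(l_1,u;\lambda)\ll X^{5/6}N_F(l_1)^{\varepsilon}N_F(l_2)^{-1/6+\varepsilon},
\]
where I match the factorization $l_1=l_2l_3^2l_4^3$ to the decomposition $r=r_1r_2^2r_3^3$ of Lemma \ref{lem1} (so $r_1=l_2$) and invoke its residue bound; this is the second term claimed. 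On the shifted line $\Re(\tfrac12+s)=1+\varepsilon$ sits at the left edge $\sigma_1-\tfrac12$ of the admissible range of Lemma \ref{lem1} and satisfies $|\,\tfrac12+s-4/3\,|\ge\tfrac13-\varepsilon>\tfrac16$, so the convexity bound yields $h(l_1,1+\varepsilon+it;\lambda)\ll N_F(l_1)^{1/4+\varepsilon}(1+t^2)^{1/2+\varepsilon}$; together with $X^{1/2+\varepsilon}$ and the decay of $\widetilde{w}$ the remaining integral is $\ll X^{1/2+\varepsilon}N_F(l_1)^{1/4+\varepsilon}$, the first term.

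The analytically substantial input, namely the continuation, growth, and residue of $h(r,s;\lambda)$ (ultimately resting on Patterson's mean value of cubic Gauss sums), is already packaged in Lemma \ref{lem1}, so no fresh hard estimate is needed here. I expect the only genuine care to be in the bookkeeping: verifying that $\overline{\leg{l_0}{n}}_3$ assembles into a single Hecke character $\pmod{18}$ of trivial infinite type so that Lemma \ref{lem1} applies verbatim, and checking that the placement $\Re(s)=\tfrac12+\varepsilon$ simultaneously respects the admissible strip and the pole-avoidance condition $|u-4/3|>1/6$. These are exactly the constraints that force the two exponents $\tfrac14$ and $-\tfrac16$ appearing in the final bound.
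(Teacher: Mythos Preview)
Your proposal is correct and follows essentially the same route as the paper's own proof: separate $l=l_0l_1$, absorb $\overline{\leg{l_0}{n}}_3$ (together with the $(n,6)=1$ condition) into a Hecke character $\lambda\pmod{18}$, use \eqref{eq:gmult} to identify the summand with $\lambda(n)g_{3,F}(l_1,n)N_F(n)^{-1/2}$, apply Mellin inversion to reach $\int_{(c)}\widetilde{w}(s)X^s h(l_1,\tfrac12+s;\lambda)\,ds$, and then shift to $\Re(s)=\tfrac12+\varepsilon$ picking up the pole at $s=5/6$ via Lemma~\ref{lem1}. The only cosmetic difference is that the paper packages the coprimality to $6$ by writing $\lambda(n)=\overline{\leg{l_06^3}{n}}_3$ explicitly, which is exactly the point you flag as needing care.
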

\begin{proof}
 Writing $l = l_0 l_1$ as above, we see that
\begin{equation*}
\overline{\chi}_n(l) = \overline{\leg{l}{n}}_3 = \overline{\leg {l_1}{n}_3} \cdot \overline{\leg{l_0}{n}}_3.
\end{equation*}
From the discussion in Section \ref{sec2.4}, the function $\lambda(n) = \overline{\leg{l_06^3}{n}}_3$ is a Hecke character $\pmod{18}$ of trivial infinite type.
Thus
\begin{equation*}
H(l,X) =\sum_{\substack\substack{n \text{ primary }}}  \frac{\lambda(n) \overline{\leg {l_1}{n}}_3g_{3,F}(n)}{ \sqrt{N_F(n)}} w\left(\frac{N_F(n)}{X}\right).
\end{equation*}
  Note that the identity \eqref{eq:gmult} implies $\overline{\leg {l_1}{n}}_3 g_{3,F}(n) = g_{3, F}(l_1, n)$ for $(n, l_1) = 1$.  Introducing the Mellin transform $\widetilde{w}(s)$ of $w$ given in \eqref{3.4}, we get
\begin{equation}
\label{eq:HlX}
 H(l,X) =
\frac{1}{2 \pi i} \int\limits_{(2)} \widetilde{w}(s) X^s h( l_1, 1/2 + s;\lambda) \dif s,
\end{equation}
   where $h$ is defined in \eqref{h}. \newline

  We move the line of integration in \eqref{eq:HlX} to $\Re (s) = 1/2 + \varepsilon$, crossing a pole at $s =5/6$, which by Lemma \ref{lem1} contributes
\begin{equation*}
\ll X^{5/6}N_F(l_1)^{\varepsilon} N_F(l_2)^{-1/6}.
\end{equation*}
  Also Lemma \ref{lem1} implies the contribution from the new line of integration gives
\begin{equation*}
\ll  X^{1/2+\varepsilon} N_F(l_1)^{1/4+\varepsilon}.
\end{equation*}
This completes the proof of Lemma \ref{lemma:Hbound}.
\end{proof}

  Now, to prove Lemma \ref{lemma:sumofGauss}, we treat $N_K(d) \leq Y$ and $N_K(d) > Y$ separately, where $Y$ is a parameter to be chosen.  For $N_K(d) \leq Y$ we use Lemma \ref{lemma:Hbound}, while for $N_K(d) > Y$ we use the trivial bound $H(l,X) \ll X$.  Thus, writing  write $dm = (dm)_0 (dm)_1$ where $(dm)_0$ is a unit times a power of $1+i$ and a power of $\sqrt{3}$ while $(dm)_1$ is primary. We further write $(dm)_1=(dm)_2(dm)^2_3(dm)^3_4$ with $(dm)_2(dm)_3$ square-free in $\mathcal{O}_F$, we have
\begin{align*}
 H'(m,Q)  \ll  \sum_{0 \neq N_K(d) \leq Y} & \leg{Q}{N_K(d^2)}^{1/2+\varepsilon} N_K((dm)^2)^{1/4+\epsilon} \\
 & + \sum_{\substack{ N_K(d) \leq Y \\ (d,6)=1}} |\mu_K(d)|\leg{Q}{N_K(d)^2}^{5/6} N_K(dm)^{\varepsilon}N_F((dm)_2)^{-1/6} + \sum_{N_K(d) > Y} \frac{Q}{N_K(d)^2}.
\end{align*}
  Note that for square-free $d \in \mathcal{O}_K, (d,6)=1$, $d$ is also square-free in $\mathcal{O}_F$. On writing $m = m_0 m_1$, $m_1=m_2m^2_3m^3_4$  where $m_0$ is a unit in $\mathcal{O}_K$ times a power of $1+i$ and a power of $3$, $m_1$ primary in  $\mathcal{O}_K$ satisfying $(m_1,6)=1$ with $m_2m_3$ being square-free in $\mathcal{O}_K$. It is then easy to see that for square-free $d \in \mathcal{O}_K$ with $(d,6)=1$, we have $N_K((dm)_2)^{-1/6} \leq N_K(d)^{1/6}N_K(m_2)^{-1/6}$. Using this, we see that
\begin{equation*}
 H'(m,Q) \ll Q^{1/2 + \varepsilon} Y^{1/2} N_K(m)^{1/2+\varepsilon} + Q Y^{-1} + Q^{5/6} N_K(m)^{\varepsilon}N_K(m_2)^{-1/3 }.
\end{equation*}
 Choosing $Y = Q^{1/3} N_K(m)^{-1/3}$ gives Lemma \ref{lemma:sumofGauss}.  By summing trivially over $m$ one easily deduces \eqref{eq:M2bound}.  \newline

\noindent{\bf Acknowledgments.} P. G. is supported in part by NSFC grant 11871082 and L. Z. by the FRG grant PS43707 and the Faculty Silverstar Award PS49334.
Parts of this work were done when P. G. visited the University of New South Wales (UNSW) in August 2018. He wishes to thank UNSW for the invitation, financial support and warm hospitality during his pleasant stay.

\bibliography{biblio}
\bibliographystyle{amsxport}

\vspace*{.5cm}

\noindent\begin{tabular}{p{8cm}p{8cm}}
School of Mathematical Sciences & School of Mathematics and Statistics \\
Beihang University & University of New South Wales \\
Beijing 100191 China & Sydney NSW 2052 Australia \\
Email: {\tt penggao@buaa.edu.cn} & Email: {\tt l.zhao@unsw.edu.au} \\
\end{tabular}

\end{document}